\newtheorem{thm}{Theorem}[section]
\newtheorem{cor}{Corollary}[section]
\newtheorem{lem}{Lemma}[section]
\begin{document}

\theoremstyle{definition}

\newtheorem{dfn}{Definition}[section]
\newtheorem{rem}{Remark}[section]
\newtheorem{exa}{Example}[section]

\title{Warped Product Pointwise Semi-slant Submanifolds of Kaehler Manifolds}
\author{Bayram \c{S}ahin}
\date{}
\maketitle

\begin{abstract}
\baselineskip=16pt
It is known that there exist no warped product semi-slant
 submanifolds  in Kaehler manifolds \cite{Sahin}. Recently, Chen and Garay studied pointwise-slant submanifolds of almost Hermitian manifolds in \cite{CG} and obtained many new results for such submanifolds. In this paper, we first introduce pointwise semi-slant submanifolds of Kaehler manifolds and then we show that there exists non-trivial warped product pointwise semi-slant submanifolds of Kaehler manifold by giving an example, contrary to the semi-slant case.  We  present a characterization theorem and establish  an inequality for the squared norm of the second fundamental form in terms of
 the warping function for such warped product submanifolds in Kaehler manifolds. The equality case is also
 considered.
 \end{abstract}
\noindent{\small {\bf Mathematics Subject Classifications (2010).} 53C40, 53C42, 53C15.}\\
\noindent{\small {\bf Key words.} Warped product, pointwise slant submanifold, pointwise semi-slant submanifold, Kaehler manifold.} \\

\section{Introduction}

CR-submanifolds of Kaehler manifolds were introduced by Bejancu
\cite{Bejancu} as a generalization of totally real submanifolds
and holomorphic submanifolds.   In \cite{CCR1},  Chen (see
also, \cite{CCR2}, \cite{CCR3}) studied warped product
CR-submanifolds and
 showed that there exist no warped product CR-submanifolds of the
form $M_{\perp} \times_f M_{T}$ such that $M_{\perp}$ is a totally
real submanifold and $M_T$ is a holomorphic submanifold of a Kaehler
manifold $\bar{M}$.  Then he introduced the CR-warped product
submanifolds as follows: A submanifold $M$  of a Kaehler manifold
$\bar{M}$ is called CR-warped product if it is the warped product
$M_T \times_f M_{\perp}$ of a holomorphic submanifold $M_T$ and
a totally real submanifold $M_{\perp}$ of $\bar{M}$.  He also
established general sharp inequalities for CR-warped products in
Kaehler manifolds. After Chen's papers, CR-warped product
submanifolds have been studied by many authors see: a survey \cite{ChenBook} and references therein. \\

On the other hand, slant submanifolds of Kaehler manifolds were
defined by Chen in \cite{CB}  as another generalization of
totally real submanifolds and holomorphic submanifolds. A slant submanifold is
called proper if it is neither totally real nor holomorphic, see also \cite{Chen-Tazawa}  for slant submanifolds. We note
that there exists no inclusion relation between proper
CR-submanifolds and proper slant submanifolds. In \cite{Papa}, N.
Papaghiuc introduced a class of submanifolds, called semi-slant
submanifolds such that the class of CR-submanifolds and the class of
slant submanifolds appear as particular classes of semi-slant
submanifolds. In \cite{Sahin}, we proved that there do not exist
warped product semi-slant submanifolds of the forms $M_{T} \times_f
M_{\theta}$ and $M_{\theta}\times_f M_T$ in Kaehler manifolds, where
$M_{T}$ is a holomorphic submanifold and $M_{\theta}$ is a proper
slant submanifold of a Kaehler manifold $\bar{M}$. Pointwise slant submanifolds of almost Hermitian manifolds were introduced by Etayo in \cite{Etayo} and such submanifolds have been studied by Chen-Garay in \cite{CG}. They obtain simple characterizations, give a method how to construct such submanifolds in Euclidean space  and investigate geometric and topological properties of pointwise slant submanifolds. In this paper we first define pointwise semi-slant submanifolds and then we show that there exists non-trivial warped product pointwise semi-slant submanifolds of the form $M_T\times_f M_{\theta}$ in Kaehler manifolds, where $M_T$ is a holomorphic submanifold and $M_{\theta}$ pointwise slant submanifolds.\\

The paper is organized as follows: In section~2, we present the
basic information needed for this paper. In section~3, we give
definition of pointwise semi-slant submanifolds. After we give two characterization theorems for
pointwise semi-slant submanifolds, we investigate the geometry of leaves of
distributions which are involved in the definition of pointwise semi-slant
submanifolds. In section~4, we prove that there do not exist
warped product submanifolds  of the form $M_{\theta}\times_f
M_{T}$ such that $M_{\theta}$ is a pointwise slant submanifold
and $M_{T}$ is a holomorphic submanifold of $\bar{M}$.  In
section~5, we consider warped product submanifolds of the form
$M_{\theta}\times_f M_{T}$ in Kaehler manifolds, give an example
and present a characterization of such warped product submanifolds.  We
also obtain an inequality for the squared norm of the second
fundamental form in terms of the warping function for warped
product pointwise semi-slant submanifolds. The equality case is also
considered.

In this paper, we assume that every object at hand is
smooth.

\section{Preliminaries}
 \setcounter{equation}{0}
\renewcommand{\theequation}{2.\arabic{equation}}

 Let ($\bar{M},g$) be a Kaehler manifold. This means \cite{YanoKon} that
$\bar{M}$ admits a tensor field $J$ of type (1,1) on $\bar{M}$ such
that, $\forall X,Y \in \Gamma(T\bar{M})$, we have
\begin{equation}
J^2=-I,\quad g(X,Y)=g(JX,JY),\quad(\bar{\nabla}_{X} J)Y=0,
\label{eq:2.1}
\end{equation}
where $g$ is the Riemannian metric and $\bar{\nabla}$ is the Levi-Civita
connection on $\bar{M}$.

Let $\bar{M}$ be a Kaehler manifold with complex structure $J$ and
$M$ a Riemannian manifold isometrically immersed in
$\bar{M}$.  Then $M$ is called holomorphic (complex) if $J (T_p
M)\subset T_p M$, for every $p \in M$, where $T_p M$  denotes the
tangent space of $M$ at the point $p $.  $M$ is called totally real
if $J(T_p M) \subset T_p M^{\perp}$  for every $p \in M,$ where $T_p
M^{\perp}$ denotes the normal space of $M$ at the point $p$. Besides
holomorphic and totally real submanifolds, there are four other
important classes of submanifolds of a Kaehler manifold determined
by the behavior of the tangent bundle of the submanifold under the
action of the complex structure of the ambient manifold.
\begin{enumerate}
    \item [(1)] The submanifold $M$ is called a CR-submanifold
    \cite{Bejancu} if there exists a differentiable distribution
    $D:p~\rightarrow ~D_p \subset T_p M$ such that $D$ is
    invariant with respect to $J$ and the complementary distribution
    $D^{\perp}$ is anti-invariant with respect to $J$.
    \item [(2)] The submanifold $M$ is called slant  \cite{CB} if for
    each
    non-zero vector $X$ tangent to $M$ the angle $\theta (X)$
    between $J X$ and $T_p M$ is a constant, i.e, it does not depend
    on the choice of $p \in M$ and $X \in T_p M$.
    \item [(3)] The submanifold $M$ is called semi-slant
    \cite{Papa} if it is endowed with two orthogonal distributions
    $D$ and $D',$ where $D$ is invariant with respect to $J$ and
    $D'$ is slant, i.e, $\theta (X) $ between $J X$ and $D'_p$ is
    constant for $X \in D'_p$.
    \item [(4)] The submanifold $M$ is called pointwise slant submanifold \cite{Etayo}, \cite{CG} if at each given point $p\in M$, the Wirtinger angle $\theta(X)$ between  $JX$ and the space $T_pM$ is independent of the choice of the nonzero vector $X \in \Gamma(TM)$. In this case, the angle $\theta$ can be regarded as a function $M$, which is called the {\it  slant function} of the pointwise slant submanifold.

\end{enumerate}
A point $p$ in a pointwise slant submanifold is called a totally real point if its slant function $\theta$
satisfies $\cos \theta = 0$ at $p$. Similarly, a point $p$ is called a complex point if its slant function satisfies $\sin \theta = 0$ at $p$.
A pointwise slant submanifold $M$ in an almost Hermitian manifold $\bar{M}$ is called totally real if every point
of $M$ is a totally real point. A pointwise slant submanifold of an almost Hermitian manifold is called pointwise proper slant
if it contains no totally real points. A pointwise slant submanifold $M$ is called slant when its slant function $\theta$ is globally
constant, i.e., $\theta$ is also independent of the choice of the point on $M$. It is clear that pointwise slant submanifolds include holomorphic and totally real submanifolds and slant submanifolds.  It is also clear that CR-submanifolds
and slant submanifolds  are particular semi-slant submanifolds with
$\theta = \frac{\pi}{2}$ and $D= \{ 0 \}$, respectively.\\

Let $M$ be a Riemannian manifold isometrically immersed in
$\bar{M}$ and denote by the same symbol $g$ for the Riemannian
metric induced on $M$. Let $\Gamma(TM)$ be the Lie algebra of
vector fields in $M$ and $\Gamma(TM^{\perp})$ the set of all
vector fields normal to $M$, same notation for smooth sections of
any other vector bundle $E$.
 Denote by $\nabla$ the Levi-Civita connection of $M$. Then the
 Gauss and Weingarten formulas are given  by
 \begin{equation}
 \bar{\nabla}_X Y=\nabla_X Y+h(X,Y) \label{eq:2.2}
 \end{equation}
 and
 \begin{equation}
 \bar{\nabla}_X N=-A_N X+{\nabla}^{\perp}_ X N \label{eq:2.3}
 \end{equation}
 for any $X, Y \in \Gamma(TM)$ and any $N \in \Gamma(TM^{\perp}),$ where
 $\nabla^{\perp}$ is the connection in the normal bundle $TM^{\perp}$, $h$ is
 the second fundamental form of $M$ and $A_N$ is the Weingarten
 endomorphism associated with $N$.  The second fundamental form $h $ and the shape operator $A$ are related by
  \begin{equation}
  g(A_N X, Y)=g(h(X,Y), N). \label{eq:2.4}
  \end{equation}

 For any $X \in \Gamma(TM)$ we write
 \begin{equation}
 JX=TX+F X, \label{eq:2.5}
 \end{equation}
where $TX$ is the tangential component of $JX$ and $F X$ is the
normal component of $JX$.  Similarly, for any vector field $N$ normal to
$M$, we put
\begin{equation}
JN=BN+CN, \label{eq:2.6}
\end{equation}
where $BN$ and $CN$ are the tangential and the normal components of
$JN$, respectively.\\

\section{Pointwise Semi-slant Submanifolds}
\setcounter{equation}{0}
\renewcommand{\theequation}{3.\arabic{equation}}
In this section, we define and study pointwise semi-slant submanifolds
in a Kaehler manifold $\bar{M}$.  We  obtain characterizations, give an example and investigate the geometry of leaves of distributions.\\

\begin{dfn} Let $\bar{M}$ be a Kaehler manifold
and $M$ a real submanifold of $\bar{M}$.  Then  we say that $M$ is
a pointwise semi-slant submanifold if there exist two orthogonal
distributions $D^{T}$ and $D^{\theta}$ on $M$ such that
\begin{enumerate}
    \item [(a)] $TM$ admits the orthogonal direct decomposition $TM=
    D^T \oplus D^{\theta}$.
    \item [(b)] The distribution $D^{T}$ is a holomorphic distribution, i.e.,
    $J D^T =D^T$.
    \item [(c)]  The distribution $D^{\theta}$ is pointwise slant with slant function  $\theta $.
\end{enumerate}
\end{dfn}

In this case, we call the angle $\theta $ the slant function of the
pointwise slant submanifold $M$.  The holomorphic distribution
$D^T$ of a pointwise semi-slant submanifold  is a pointwise slant distribution
with slant function $\theta =0$. If we denote the
dimension of $D^T$ and $D^{\theta}$  by $m_1$ and $m_2$,
respectively, then we have the
following:
\begin{enumerate}
    \item [(a)] If $m_2=0$, then $M$ is a holomorphic submanifold.
    \item [(b)] If $m_1 =0$,  then $M$ is a pointwise slant submanifold.
    \item [(c)] If  $\theta$ is constant then $M$ is a proper semi-slant submanifold with slant angle $\theta
    $.
    \item [(d)] If $\theta =\frac{\pi }{2},$ then $M$ is a CR-submanifold.
\end{enumerate}

We say that a pointwise semi-slant submanifold is proper if
    $m_1 \neq 0$ and $\theta$ is not a constant.\\

\begin{exa} Let $M$ be a submanifold of
$\mathbf{R}^6$ given by
$$\chi(t,s,u,v)=(t,s,u,\sin\,v, 0,\cos\, v).$$
It is easy to see that a local frame of $TM$ is given by
$$
Z_1 =\frac{\partial}{\partial x_1},Z_2=\frac{\partial}{\partial x_2},Z_3 =\frac{\partial}{\partial x_3},Z_4=\cos\,v \frac{\partial}{\partial x_4}-\sin\,v\frac{\partial}{\partial
x_6}.$$
Then using the canonical complex structure of $\mathbf{R}^6$, we
see that $D^{T}=span\{ Z_1,Z_2\}$. Moreover it is easy to see that $D^{\theta} =span \{Z_3, Z_4 \}$
is a pointwise slant distribution with slant function $v$. Thus $M$ is
a proper pointwise semi-slant submanifold of $\mathbf{R}^6$. \\
\end{exa}

Let $M$ be a pointwise semi-slant submanifold of a Kaehler manifold
$\bar{M}$.  We denote the projections on the distributions $D^{T} $
and $D^{\theta}$ by $P_1 $ and $P_2 $, respectively. Then we can
write
\begin{equation}
X= P_1 X + P_2 X \label{eq:3.1}
\end{equation}
for any $X \in \Gamma(TM)$.  Applying $J$ to (\ref{eq:3.1}) and using
(\ref{eq:2.5})  we obtain
\begin{equation}
JX =JP_1 X +TP_2 X +FP_2 X. \label{eq:3.2}
\end{equation}
Thus we have
\begin{eqnarray}
JP_1 X \in \Gamma(D^T) &,& FP_1 X=0, \label{eq:3.3}\\
TP_2 X \in \Gamma(D^{\theta}) &,& FP_2 X \in \Gamma(TM^{\perp}).
\label{eq:3.4}
\end{eqnarray}
Then (\ref{eq:3.3}) and (\ref{eq:3.4}) imply
\begin{equation}
TX = JP_1 X+TP_2 X \label{eq:3.5}
\end{equation}
for $X \in \Gamma(TM)$. \\

It is known that  $M$ is a pointwise slant submanifold of $\bar{M}$ if and only
if
\begin{equation}
T^2 = -({\cos}^2\,\theta) I\label{eq:3.6}
\end{equation}
for some real-valued function $\theta$ defined on $M$  \cite{CG}, where $I$
denotes the identity transformation of the tangent bundle $TM$ of
the submanifold $M$.    Thus we can prove the following characterization theorem.\\

\begin{thm} Let $D$ be a distribution on $M$.
Then $D$ is pointwise slant if and only if there exists a function
$\lambda \in [-1,0]$ such that $(TP_2)^2 X = \lambda\, X $ for
$X \in \Gamma(D),$ where $P_2$ denotes the orthogonal projection on
$D$. Moreover in this case $\lambda = -\cos^2\theta $.\end{thm}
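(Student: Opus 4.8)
The plan is to reduce this to the known characterization (\ref{eq:3.6}) for pointwise slant submanifolds, which the excerpt allows me to assume. The statement concerns a distribution $D$ on $M$ equipped with the orthogonal projection $P_2$ onto $D$, and the operator $(TP_2)^2$ acting on $\Gamma(D)$. Since $T$ is the tangential part of $J$ (see (\ref{eq:2.5})), and $P_2$ restricts everything to $D$, the composite $TP_2$ is precisely the endomorphism that sends $X \in \Gamma(D)$ to the $D$-tangential component of $JX$. So the first thing I would do is make explicit that, for $X \in \Gamma(D)$, $(TP_2)X$ is the tangential-to-$D$ part of $JX$, and that $(TP_2)^2$ is the natural ``$T^2$'' operator intrinsic to the distribution $D$ viewed as if it were the tangent bundle of a slant submanifold.

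The forward direction is the easier half. Assuming $D$ is pointwise slant with slant function $\theta$, I would apply the characterization (\ref{eq:3.6}) in the form $T_D^2 = -(\cos^2\theta)I$, where $T_D = TP_2$ is the restriction of $T$ to $D$. Reading off the constant, I set $\lambda = -\cos^2\theta$; since $\theta \in [0,\pi/2]$ for a Wirtinger angle, $\cos^2\theta \in [0,1]$ and hence $\lambda \in [-1,0]$, which is exactly the stated range. This gives both the existence of $\lambda$ and the identification $\lambda = -\cos^2\theta$ in one move.

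For the converse I would start from the hypothesis $(TP_2)^2 X = \lambda X$ with $\lambda \in [-1,0]$ and aim to recover the constant Wirtinger angle at each point. The natural route is to compute $\cos\theta(X)$ directly from its definition: for a unit $X \in \Gamma(D)$, the Wirtinger angle satisfies $\cos\theta(X) = \dfrac{g(JX, T_D X)}{|JX|\,|T_D X|}$, or more cleanly $\cos^2\theta(X) = \dfrac{g(T_D X, T_D X)}{g(JX, JX)} = |T_D X|^2$ since $J$ is an isometry and $|X|=1$. I would then use $g(T_D^2 X, X) = -g(T_D X, T_D X)$ — which follows because $T$ is skew-symmetric as the tangential part of the orthogonal map $J$ (equivalently, from $g(JX,Y) = -g(X,JY)$ restricted to $D$) — to get $|T_D X|^2 = -g(\lambda X, X) = -\lambda$. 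Thus $\cos^2\theta(X) = -\lambda$ is independent of the choice of nonzero $X \in D_p$, so the angle is pointwise well-defined, meaning $D$ is pointwise slant, and again $\lambda = -\cos^2\theta$.

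The main obstacle, and the step I would check most carefully, is the skew-symmetry claim $g(T_D X, Y) = -g(X, T_D Y)$ for $X,Y \in \Gamma(D)$, since $T_D = TP_2$ involves the projection and one must verify that the normal and $D^T$-directed parts of $JX$ do not contaminate the inner product when both arguments lie in $D$. This follows from $g(JX,JY)=g(X,Y)$ together with orthogonality of the decomposition, but it is worth writing out so that the passage from $(TP_2)^2 X = \lambda X$ to $|T_D X|^2 = -\lambda$ is airtight. Everything else is a direct appeal to (\ref{eq:3.6}) and the definition of the Wirtinger angle.
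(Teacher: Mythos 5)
Your proposal is correct in substance and follows essentially the route the paper intends but never writes out: the paper states Theorem~3.1 with no proof beyond recording (\ref{eq:3.6}) and remarking that the result is ``similar to'' the Sasakian case of \cite{CCFF}, so your converse direction --- recovering the pointwise constancy of the Wirtinger angle from the eigenvalue equation via skew-symmetry of the tangential part of $J$ --- is precisely the content being deferred. One imprecision should be repaired before the argument is airtight. For $X\in\Gamma(D)$ one has $TP_2X=TX$, the component of $JX$ tangent to $M$, \emph{not} the component tangent to $D$; the projection onto $D$ only intervenes at the second application, $(TP_2)^2X=T(P_2TX)$. The operator you want to call $T_D$ is therefore $P_2T$ restricted to $D$ (which is exactly how the paper later defines $T_D$, via $T_D=P_2TI_D$), and the identity you need is $g\bigl((TP_2)^2X,X\bigr)=-g(P_2TX,TX)=-\|P_2TX\|^2$, where the last equality uses that $P_2$ is an orthogonal projection. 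Since for a distribution the Wirtinger angle is measured against $D_p$, one has $\cos^2\theta(X)=\|P_2TX\|^2/\|X\|^2$ rather than $\|TX\|^2/\|X\|^2$, and your conclusion $\cos^2\theta=-\lambda$ then follows as you describe; note also that $(TP_2)^2$ and $(P_2T)^2$ have the same inner products against vectors of $D$, so the two readings give the same quadratic form.

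A smaller point, which affects the forward direction and which neither your proposal nor the paper addresses: pointwise slantness of $D$ gives, after polarization, only $g\bigl((TP_2)^2X,Y\bigr)=-\cos^2\theta\,g(X,Y)$ for $X,Y\in\Gamma(D)$, i.e.\ it pins down the $D$-component of $(TP_2)^2X$. To promote this to the vector equation $(TP_2)^2X=\lambda X$ one must also know that $T(P_2TX)$ has no component in the orthogonal complement of $D$ inside $TM$, which holds in the paper's applications (where the complementary distribution is $J$-invariant, so $P_2T$ annihilates it) but is not automatic for an arbitrary distribution. It would strengthen your write-up to state this explicitly when you invoke (\ref{eq:3.6}) ``in the form $T_D^2=-(\cos^2\theta)I$.''
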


Actually this theorem is similar to that theorem given  \cite{CCFF}
for Sasakian case. We can use Theorem~3.1 to characterize pointwise
semi-slant submanifolds. Let $M$ be a real submanifold of an almost
Hermitian manifold $\bar{M}$ and $D$ a distribution on $M$. We
define $T_{D}:D\longrightarrow TM$ by
$T_{D}(X)=(JX)^{\mathcal{T}_{D}}$, where $\mathcal{T}_{D}$ is the
orthogonal projection of $T\bar{M}$ onto $D$. If $M$ is a pointwise
slant submanifold and $D$ is its slant distribution, we have
$$T_{D}=P_2TI_{D},$$
where $I_{D}$ is the identity of $D$.\\

\begin{thm} Let $M$ be a submanifold of a
Kaehler manifold $\bar{M}$.  Then $M$ is a pointwise semi-slant
submanifold if and only if there exists a function $\lambda \in
[-1,0]$ and a distribution $D$ on $M$ such that
\begin{enumerate}
    \item [(i)] $D=\{ X \in \Gamma(TM) \mid (T_{D})^2 X = \lambda X
    \}$,
    \item [(ii)] {\sl $T$ maps $D$ into $D$. }
\end{enumerate}
Moreover in this case $\lambda = -\cos^2\theta $, where $\theta $
denotes the slant function of $M$.\end{thm}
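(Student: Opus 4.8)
The plan is to prove the two directions of the equivalence separately, building on the pointwise-slant characterization already established in Theorem~3.1 and the notation for $T_D$ just introduced.

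First I would treat the forward direction. Assume $M$ is a pointwise semi-slant submanifold, so by definition $TM = D^T \oplus D^\theta$ with $D^T$ holomorphic and $D^\theta$ pointwise slant with slant function $\theta$. I would take $D = D^\theta$ and show it satisfies (i) and (ii). For (ii), the inclusion $T(D^\theta)\subset D^\theta$ follows from the decomposition: for $X\in\Gamma(D^\theta)$ we have $TX = JP_1X + TP_2X = TP_2X \in \Gamma(D^\theta)$ by (\ref{eq:3.5}) and (\ref{eq:3.4}), since $P_1X=0$. For the operator $T_D$ on $D=D^\theta$, the identity $T_D = P_2 T I_D$ noted just before the theorem gives $(T_D)^2 X = (P_2 T)^2 X = (TP_2)^2 X$ on $D^\theta$; then Theorem~3.1 applied to the slant distribution $D^\theta$ yields $(T_D)^2 X = \lambda X$ with $\lambda = -\cos^2\theta\in[-1,0]$. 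I still need to verify that $D^\theta$ is exactly the eigenspace in (i), i.e.\ that no vector outside $D^\theta$ lies in the set $\{X : (T_D)^2 X = \lambda X\}$; this follows because on $D^T$ we have $(T_D)^2 = -I$ (as $T=J$ there), so a vector in $D^T$ would force $\lambda=-1$, which corresponds to the complex case and is excluded by properness, or is handled by restricting to $\lambda\neq -1$.

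For the converse, I would assume the existence of $\lambda\in[-1,0]$ and a distribution $D$ satisfying (i) and (ii), and construct the pointwise semi-slant structure. The natural candidate is to set $D^\theta = D$ and $D^T = D^\perp$ (the orthogonal complement of $D$ in $TM$), then verify the three defining properties. Condition (ii) together with (i) shows $(TP_2)^2 = \lambda I$ on $D$, so Theorem~3.1 immediately gives that $D$ is pointwise slant with $\lambda=-\cos^2\theta$. The main work is showing $D^\perp = D^T$ is holomorphic, i.e.\ $JD^T = D^T$; I would argue that the $T$-invariance of $D$ forces, via the metric compatibility $g(TX,Y) = -g(X,TY)$ (skew-symmetry of $T$, following from $J$ being an isometry), that $T$ also maps $D^\perp$ into $D^\perp$, and then use that the only way the orthogonal complement can avoid introducing a normal component of $J$ is for $D^T$ to be invariant. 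This is where I expect the main obstacle.

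The crux of the converse is establishing that the complement $D^T$ of a $T$-invariant pointwise-slant eigen-distribution is genuinely holomorphic, rather than merely another slant piece. The delicate point is ruling out a nontrivial $F$-component on $D^T$: one must show $FX = 0$ for $X\in\Gamma(D^T)$, equivalently $TX = JX$, so that $J$ preserves $D^T$. The plan is to exploit the relation $g(FX, FY) = g(JX,JY) - g(TX,TY) = g(X,Y) - g(TX,TY)$ together with the eigenvalue condition to compute $\|FX\|^2$ in terms of $\lambda$ and the decomposition, showing it must vanish on $D^T$. The argument hinges on a careful bookkeeping of how $T$ interchanges $D$ and $D^\perp$: since $T$ maps $D$ into $D$ and is skew-symmetric, $T$ maps $D^\perp$ into $D^\perp$, so $(TX)$ has no $D$-component for $X\in D^\perp$; combining this with $T^2 X + FBX$-type identities (from applying $J$ twice and using $J^2=-I$) should pin down $T^2 = -I$ on $D^T$, forcing $\cos^2\theta = 1$ and hence $FX=0$ there. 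Closing this computation cleanly is the step I would budget the most care for.
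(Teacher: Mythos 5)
Your forward direction is essentially the paper's (take $D=D^\theta$, $\lambda=-\cos^2\theta$, invoke Theorem~3.1), apart from one slip: for $X\in\Gamma(D^T)$ you claim $(T_D)^2X=-X$ "since $T=J$ there", but $T_D$ by definition projects $JX$ onto $D=D^\theta$, and $JX\in\Gamma(D^T)$, so in fact $(T_D)^2X=0$ on $D^T$. The exactness of the eigenspace in (i) is therefore automatic whenever $\lambda\neq 0$, and your appeal to "properness excluding $\lambda=-1$" is not the right mechanism (the delicate boundary case is actually $\lambda=0$, i.e.\ the CR case, not $\lambda=-1$).

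The genuine gap is in the converse, and it sits exactly where you say you "expect the main obstacle": you never establish that $F$ vanishes on $D^\perp$, i.e.\ that $D^T=D^\perp$ is holomorphic, and the route you sketch cannot be closed. From (ii) and the skew-symmetry of $T$ you correctly get $T(D^\perp)\subset D^\perp$, but after that the hypotheses give no further control: for $X\in\Gamma(D^\perp)$ one has $P_DTX=0$, hence $(T_D)^2X=0$, so condition (i) is satisfied vacuously by every $X\in D^\perp$ once $\lambda\neq 0$ and carries no information about $J$ on $D^\perp$. Concretely, a pointwise bi-slant submanifold such as a product $M_1\times M_2\subset\mathbf{C}^2\times\mathbf{C}^2$ of two proper slant surfaces with different slant angles satisfies (i) and (ii) with $D=TM_2$, yet $D^\perp=TM_1$ is proper slant rather than holomorphic; so no amount of bookkeeping with $J^2=-I$ will "pin down $T^2=-I$ on $D^T$". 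You should be aware that the paper's own proof of this step is a one-line assertion ("Since $T$ maps $D$ into $D$, it implies that $J(D^T)=D^T$") with no supporting argument, so the difficulty you isolated is real and is not resolved in the source either; but as a proof, your proposal leaves the decisive implication of the converse unproven.
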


\begin{proof} Let $M$ be a pointwise semi-slant submanifold
of $\bar{M}$.  Then $\lambda = -\cos^2 \theta$ and $D=D^{\theta} $.
By the definition of pointwise semi-slant submanifold, (ii) is
clear. Conversely (i) and (ii) imply $TM=D \oplus D^T$.  Since $T$
maps $D$ into $D$, it implies that  $J(D^T)=D^T$. Thus proof is
complete.\end{proof} From Theorem~3.2 we have the following
corollary:
\begin{cor} Let $M$ be a pointwise semi-slant
submanifold of a Kaehler manifold $\bar{M}$.  Then we have
\begin{eqnarray}
g(TX,TY)&=& \cos^2\theta \, g(X,Y) \label{eq:3.10} \\
g(FX,FY)&=& \sin^2 \theta \, g(X,Y) \label{eq:3.11}
\end{eqnarray}
for $X, Y \in \Gamma(D^{\theta})$.
\end{cor}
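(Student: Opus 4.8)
The plan is to derive both identities from the single algebraic relation $T^2=-(\cos^2\theta)\,I$ on $D^{\theta}$ together with the metric-compatibility of $J$. First I would record that the tangential operator $T$ is skew-symmetric with respect to $g$. Indeed, for $X,Y\in\Gamma(TM)$ the normal component $FX$ is orthogonal to the tangent vector $Y$, so $g(TX,Y)=g(JX,Y)$ by (\ref{eq:2.5}); combining this with the Kaehler relation $g(JX,JY)=g(X,Y)$ from (\ref{eq:2.1}) (which yields $g(JX,Y)=-g(X,JY)$ upon replacing $Y$ by $JY$ and using $J^2=-I$) gives $g(TX,Y)=-g(X,TY)$.

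For the first identity I would note that, by Theorem~3.1 applied to $D=D^{\theta}$ and by the definition of pointwise semi-slant submanifold, $T$ preserves $D^{\theta}$ and satisfies $T^{2}X=-(\cos^2\theta)\,X$ for $X\in\Gamma(D^{\theta})$; here one uses that $P_2X=X$ on $D^{\theta}$, so that $(TP_2)^2$ reduces to $T^2$ on this distribution. Applying skew-symmetry of $T$ with $TY$ in the second slot then gives
\[
g(TX,TY)=-g(X,T^{2}Y)=\cos^{2}\theta\,g(X,Y),
\]
which is (\ref{eq:3.10}).

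Finally, for the second identity I would exploit the orthogonal splitting $JX=TX+FX$ into a tangent part $TX$ and a normal part $FX$. Since tangent and normal vectors are $g$-orthogonal, expanding $g(JX,JY)$ collapses the cross terms and yields
\[
g(JX,JY)=g(TX,TY)+g(FX,FY).
\]
Invoking $g(JX,JY)=g(X,Y)$ and substituting (\ref{eq:3.10}) produces
\[
g(FX,FY)=g(X,Y)-\cos^{2}\theta\,g(X,Y)=\sin^{2}\theta\,g(X,Y),
\]
which is (\ref{eq:3.11}).

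The computation is entirely routine; the only point requiring care is the bookkeeping of inputs, namely confirming that $T^{2}=-(\cos^2\theta)I$ holds on the slant distribution $D^{\theta}$ (rather than merely $(TP_2)^2=\lambda I$) and that $T$ maps $D^{\theta}$ into itself, so that $T^{2}Y$ again lies in $D^{\theta}$ and the eigenrelation applies. There is no genuine obstacle beyond separating the tangential and normal components correctly.
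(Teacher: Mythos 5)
Your argument is correct and follows essentially the same route as the paper: the paper likewise computes $g(TX,TY)=-g(X,JTY)$ (which is your skew-symmetry of $T$) and then invokes the eigenrelation $T^2=-\cos^2\theta\, I$ on $D^{\theta}$ from Theorem~3.1/3.2, and obtains (\ref{eq:3.11}) by expanding $g(JX,JY)$ exactly as you do. Your explicit bookkeeping that $T$ preserves $D^{\theta}$ so that $(TP_2)^2$ reduces to $T^2$ there is a point the paper leaves implicit, but it is the same proof.
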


\begin{proof} For $X, Y \in \Gamma(D^{\theta})$, from
(\ref{eq:2.1}) we have $g(TX,TY)=g(JX-FX,TY)$.
Hence$g(TX,TY)=-g(X,JTY)$.  Using Theorem~3.2 (i), we obtain
(\ref{eq:3.10}). Using (\ref{eq:3.10}) we get
(\ref{eq:3.11}).\end{proof}

In the rest of this section, we first study  the integrability of
distributions  and then we find  the conditions under which leaves of
distributions on a pointwise semi-slant submanifold $M$ in a Kaehler
manifold $\bar{M}$ are totally geodesic immersed in $M$.  For the integrability of the distributions $D^{T}$  and $D^{\theta}$ on a pointwise semi-slant
submanifold $M$, we have the following.

\begin{thm}
Let $M$ be a proper pointwise semi-slant submanifold of a Kaehler manifold.
\begin{enumerate}
  \item [(i)] The distribution $D^T$ is integrable if and only if
$$g(h(X,JY),FV)=g(h(JX,Y),FV),\, \forall X,Y\in \Gamma(D^{T})\quad \mathrm{\it and}\quad V\in \Gamma(D^{\theta}).$$
  \item [(ii)] The distribution $D^{\theta}$ is integrable if and only if
$$g(A_{FTW}V-A_{FTV}W,X)=g(A_{FW}V-A_{FV}W,JX)$$
for $W \in \Gamma(D^{\theta})$.
\end{enumerate}
\end{thm}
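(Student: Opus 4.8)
The plan is to reduce the integrability of each distribution to the vanishing of the component of the Lie bracket in the complementary distribution, and then to convert that scalar condition into the stated second fundamental form criterion by combining the Kaehler identity $\bar{\nabla}_X JY=J\bar{\nabla}_X Y$, metric compatibility of $J$, the Gauss and Weingarten formulas, and the pointwise slant identity (\ref{eq:3.6}). Throughout, I would use that $D^T$ and $D^\theta$ are orthogonal and span $TM$, so that membership in one distribution is detected by orthogonality to the other.

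For part (i), $D^T$ is integrable if and only if $g([X,Y],V)=0$ for all $X,Y\in\Gamma(D^T)$ and $V\in\Gamma(D^\theta)$. First I would write $[X,Y]=\bar{\nabla}_X Y-\bar{\nabla}_Y X$ and use metric compatibility together with the Kaehler condition to rewrite $g(\bar{\nabla}_X Y,V)=g(J\bar{\nabla}_X Y,JV)=g(\bar{\nabla}_X JY,JV)$. Since $D^T$ is holomorphic, $JY\in\Gamma(D^T)\subset\Gamma(TM)$, so splitting $JV=TV+FV$ and applying Gauss--Weingarten separates a normal contribution $g(h(X,JY),FV)$ from a tangential term $g(\nabla_X JY,TV)$. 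I would process the tangential term by a second application of the Kaehler identity, rewriting it as $-g(\bar{\nabla}_X Y,JTV)$ and then invoking $T^2V=-\cos^{2}\theta\,V$ to produce $\cos^{2}\theta\,g(\nabla_X Y,V)$ together with a shape-operator term $g(h(X,Y),FTV)$ that cancels after antisymmetrizing in $X,Y$ (by symmetry of $h$). Collecting everything yields $\sin^{2}\theta\,g([X,Y],V)=g(h(X,JY)-h(JX,Y),FV)$, and since $M$ is proper (so $\sin\theta\neq 0$ on $D^\theta$) the stated equivalence follows.

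For part (ii), $D^\theta$ is integrable if and only if $g([V,W],X)=0$ for all $V,W\in\Gamma(D^\theta)$ and $X\in\Gamma(D^T)$. Arguing analogously, I would write $g(\bar{\nabla}_V W,X)=g(\bar{\nabla}_V JW,JX)$ and split $JW=TW+FW$; the normal piece gives $-g(A_{FW}V,JX)$ via Weingarten, while the tangential piece $g(\nabla_V TW,JX)$ requires a further Kaehler step, rewriting it as $-g(\bar{\nabla}_V JTW,X)$ and using $JTW=T^2W+FTW=-\cos^{2}\theta\,W+FTW$. Here the term carrying $V(\cos^{2}\theta)$ drops out because $g(W,X)=0$, leaving $\cos^{2}\theta\,g(\nabla_V W,X)$ and $g(A_{FTW}V,X)$. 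This produces the single-vector identity $\sin^{2}\theta\,g(\nabla_V W,X)=g(A_{FTW}V,X)-g(A_{FW}V,JX)$; subtracting the same identity with $V$ and $W$ interchanged gives $\sin^{2}\theta\,g([V,W],X)=g(A_{FTW}V-A_{FTV}W,X)-g(A_{FW}V-A_{FV}W,JX)$, and dividing by $\sin^{2}\theta$ yields the claimed criterion.

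The main obstacle I anticipate is the careful bookkeeping of tangential versus normal components across the two successive uses of the Kaehler identity: recognizing that the derivative terms in $\cos^{2}\theta$ vanish by the orthogonality of $D^T$ and $D^\theta$, and that the second fundamental form terms combine correctly only after antisymmetrization. The essential structural point in both parts is that the slant identity (\ref{eq:3.6}) is precisely what manufactures the factor $\sin^{2}\theta$, so the equivalences are genuine only under the properness hypothesis guaranteeing $\sin\theta\neq 0$; this is where the assumption that $M$ is a \emph{proper} pointwise semi-slant submanifold enters.
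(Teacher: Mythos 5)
Your proposal is correct and follows essentially the same route as the paper: two applications of the Kaehler identity together with the Gauss--Weingarten formulas and $T^2V=-\cos^2\theta\,V$ to obtain $\sin^2\theta\,g([X,Y],V)=g(h(X,JY)-h(JX,Y),FV)$, and the analogous identity for part (ii) (which the paper only sketches as ``similar''). Your explicit remark on where properness is used to divide by $\sin^2\theta$ is a welcome clarification but not a departure from the paper's argument.
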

\begin{proof}We prove (i), (ii) can be obtained in a similar way. From (\ref{eq:2.1}), (\ref{eq:2.3}) and (\ref{eq:2.5}) we have
\begin{eqnarray}
g([X,Y],V)&=&-g(\bar{\nabla}_XY,T^2V+FTV)+g(h(X,JY),FV)\nonumber\\
&&+g(\bar{\nabla}_YX,T^2V+FTV)-g(h(JX,Y),FV).\nonumber
\end{eqnarray}
Then the symmetric $h$ and (\ref{eq:3.6}) imply that
$$
\sin^2\theta g([X,Y],V)=g(h(X,JY),FV)-g(h(JX,Y),FV)
$$
which gives the assertion.
\end{proof}

Next we give necessary and sufficient conditions for the distributions $D^T$ and $D^{\theta}$  whose leaves are totally geodesic.
\begin{thm}Let $M$ be a proper pointwise semi-slant submanifold of a Kaehler manifold.
\begin{enumerate}
\item [(a)] The holomorphic distribution $D^T$ defines a totally geodesic foliation if and only if
\begin{equation}
g(h(X,Y),FTV)=g(h(X,JY),FV)\label{eq:}
\end{equation}
for $X,Y\in \Gamma(D^T)$ and $V\in \Gamma(D^{\theta})$.
\item [(b)]The
slant distribution $D^{\theta}$  defines a totally geodesic foliation on $M$  if and only if
$$g(h(U,X),FTV)=g(h(U,JX),FV)$$
 for $X\in \Gamma(D^T)$ and $U,V\in \Gamma(D^{\theta})$.
 \end{enumerate}
 \end{thm}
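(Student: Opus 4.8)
The plan is to translate the totally geodesic foliation condition into the vanishing of a single mixed component of the induced connection. Since the leaves of a distribution are totally geodesic in $M$ exactly when $\nabla$ preserves that distribution, and since $TM=D^T\oplus D^{\theta}$ is an orthogonal decomposition, $D^T$ defines a totally geodesic foliation if and only if $g(\nabla_X Y,V)=0$ for all $X,Y\in\Gamma(D^T)$ and $V\in\Gamma(D^{\theta})$, while $D^{\theta}$ defines a totally geodesic foliation if and only if $g(\nabla_U V,X)=0$ for all $U,V\in\Gamma(D^{\theta})$ and $X\in\Gamma(D^T)$. Each part therefore reduces to evaluating one such inner product.

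For part (a) I would begin with $g(\nabla_X Y,V)=g(\bar{\nabla}_X Y,V)$, the normal term $h(X,Y)$ dropping out, and then use the parallelism of $J$ together with the metric compatibility in (\ref{eq:2.1}) to pass to $g(\bar{\nabla}_X JY,JV)$. Splitting $JV=TV+FV$ by (\ref{eq:2.5}), the normal part yields $g(h(X,JY),FV)$ directly from the Gauss formula (\ref{eq:2.2}), since $JY\in\Gamma(D^T)$ is tangent. The tangential part $g(\bar{\nabla}_X JY,TV)$ I would treat by a second application of the same $J$-maneuver, using $J(JY)=-Y$ and the pointwise slant identity $T^2V=-(\cos^2\theta)V$ from (\ref{eq:3.6}); this produces a multiple $\cos^2\theta\,g(\bar{\nabla}_X Y,V)$ together with the term $g(h(X,Y),FTV)$. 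Collecting everything gives the master identity
$$
\sin^2\theta\,g(\nabla_X Y,V)=g(h(X,JY),FV)-g(h(X,Y),FTV),
$$
from which (a) is immediate.

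Part (b) follows the same template, but with the roles of the two components of $JV$ interchanged: here $V\in\Gamma(D^{\theta})$, so both $FV$ and $FTV$ are genuinely normal, and it is the Weingarten formula (\ref{eq:2.3}) together with (\ref{eq:2.4}) that extracts the shape operator. Starting from $g(\nabla_U V,X)=g(\bar{\nabla}_U JV,JX)$ and writing $JV=TV+FV$, the summand $\bar{\nabla}_U FV$ contributes $-g(A_{FV}U,JX)=-g(h(U,JX),FV)$, while $\bar{\nabla}_U TV$, after a second use of $J$ and (\ref{eq:3.6}), contributes $\cos^2\theta\,g(\bar{\nabla}_U V,X)$ together with $g(h(U,X),FTV)$. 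One point to watch is that differentiating the function $\cos^2\theta$ generates a term proportional to $g(V,X)$, which vanishes because $D^T$ and $D^{\theta}$ are orthogonal. The resulting identity
$$
\sin^2\theta\,g(\nabla_U V,X)=g(h(U,X),FTV)-g(h(U,JX),FV)
$$
yields (b).

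The delicate part is not any single step but the bookkeeping across the two successive applications of $J$: one must keep track of which terms land in the normal bundle (those paired with $FV$ or $FTV$, which survive) and which remain tangential (the multiples of the original inner product by $\cos^2\theta$, which must be collected on the left), and invoke (\ref{eq:3.6}) at exactly the right moment. Once the two master identities are in hand, properness of $M$ guarantees $\sin^2\theta\neq 0$, so dividing through converts the vanishing of the left-hand sides into the stated equalities, giving both equivalences.
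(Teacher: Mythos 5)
Your proposal is correct and takes essentially the same route as the paper's proof: both reduce the totally geodesic foliation condition to the vanishing of $g(\nabla_XY,V)$ (resp.\ $g(\nabla_UV,X)$) and derive the master identity $\sin^2\theta\,g(\nabla_XY,V)=g(h(X,JY),FV)-g(h(X,Y),FTV)$ (and its analogue for (b)) via two applications of $J$, the splitting $JV=TV+FV$, and $T^2V=-\cos^2\theta\,V$. The division by $\sin^2\theta$ under the properness hypothesis is exactly what the paper does implicitly as well.
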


\begin{proof} Let $M$ be a   proper pointwise semi-slant submanifold of a Kaehler manifold $\bar{M}$.  Then  we have
$g(\nabla_XY,V)=g(\bar{\nabla}_XJY,JV)$ for $X,Y\in \Gamma(D^T)$ and $V\in \Gamma(D^{\theta})$. Thus using (\ref{eq:2.5}) and (\ref{eq:2.6}) we get
$$g(\nabla_XY,V)=-g(\bar{\nabla}_XY,JTV)+g(\bar{\nabla}_XJY,FV).$$ Then (\ref{eq:3.6}) implies that
$$\sin^2\,\theta g(\nabla_XY,V)=-g(h(X,Y),FTV)+g(h(X,JY),FV)$$
which gives (a). In a similar way, we obtain (b).\end{proof}

 Thus from Theorem~3.4, we have the following
 result:

 \begin{cor} Let $M$ be a pointwise semi-slant
 submanifold of a Kaehler manifold $\bar{M}$.  Then $M$ is a locally
 Riemannian product manifold $M=M_{T} \times M_{\theta}$ if and
 only if
 $$A_{FTV} X=A_{FV} JX$$
  for $V \in \Gamma(D^{\theta})$ and $X \in \Gamma(D^{\perp})$,
where $M_{T}$ is a holomorphic submanifold and $M_{\theta}$ is
a pointwise slant submanifold of $\bar{M}$. \end{cor}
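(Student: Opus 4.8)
The plan is to deduce this from Theorem~3.4 combined with the classical decomposition criterion: a Riemannian manifold carrying two orthogonal complementary distributions $D^T$ and $D^{\theta}$ with $TM=D^T\oplus D^{\theta}$ is locally the Riemannian product of their integral manifolds precisely when each distribution is integrable and has totally geodesic leaves, i.e.\ when each defines a totally geodesic foliation. (I read the symbol $D^{\perp}$ in the statement as $D^T$, the only invariant distribution at hand.) Hence it suffices to prove that the single identity $A_{FTV}X=A_{FV}JX$, quantified over $X\in\Gamma(D^T)$ and $V\in\Gamma(D^{\theta})$, is equivalent to conditions (a) and (b) of Theorem~3.4 holding \emph{simultaneously}.

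First I would polarize the vector identity. Since $A_{FTV}X$ and $A_{FV}JX$ are both tangent to $M$, the equality $A_{FTV}X=A_{FV}JX$ holds if and only if $g(A_{FTV}X,Y)=g(A_{FV}JX,Y)$ for every $Y\in\Gamma(TM)$. By the fundamental relation (\ref{eq:2.4}) this is the same as
\begin{equation}
g(h(X,Y),FTV)=g(h(JX,Y),FV)\qquad(X\in\Gamma(D^T),\ V\in\Gamma(D^{\theta})) \label{eq:corpol}
\end{equation}
being valid for all $Y\in\Gamma(TM)$.

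Next I would test (\ref{eq:corpol}) separately against the two summands of $Y$ under $TM=D^T\oplus D^{\theta}$, which is legitimate because both sides are linear in $Y$. For $Y\in\Gamma(D^T)$, the holomorphy $JD^T=D^T$ keeps $JX$ and $Y$ inside $D^T$, and the symmetry of $h$ gives $g(h(JX,Y),FV)=g(h(Y,JX),FV)$; comparing with Theorem~3.4(a)---which, being quantified over all ordered pairs in $D^T$, may be applied with the roles of $X$ and $Y$ exchanged---shows that (\ref{eq:corpol}) restricted to $Y\in\Gamma(D^T)$ is exactly condition (a). For $Y\in\Gamma(D^{\theta})$, say $Y=U$, the symmetry of $h$ rewrites (\ref{eq:corpol}) as $g(h(U,X),FTV)=g(h(U,JX),FV)$, which is verbatim condition (b). Thus (\ref{eq:corpol}) for all $Y$ is equivalent to (a) and (b) together.

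Finally I would assemble the equivalence. By Theorem~3.4, (a) says $D^T$ defines a totally geodesic foliation and (b) says the same for $D^{\theta}$; when both hold, the decomposition criterion yields the local Riemannian product $M=M_T\times M_{\theta}$, where the leaf $M_T$ is holomorphic because $JD^T=D^T$ and the leaf $M_{\theta}$ is pointwise slant with slant function $\theta$. The converse follows by reading the same chain of equivalences backwards, since on a Riemannian product both factor distributions are parallel and hence have totally geodesic leaves. I expect the only real care to be needed in the index bookkeeping of the middle step---placing the symmetry of $h$ and the invariance $JD^T=D^T$ so that the two halves of (\ref{eq:corpol}) land on (a) and (b) respectively---rather than in any deeper geometric point.
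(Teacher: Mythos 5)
Your proposal is correct and follows exactly the route the paper intends: the paper derives this corollary directly from Theorem~3.4 (stating only ``Thus from Theorem~3.4, we have the following result''), and your argument---polarizing $A_{FTV}X=A_{FV}JX$ against $Y\in\Gamma(D^T)$ and $Y\in\Gamma(D^{\theta})$ via (\ref{eq:2.4}) to recover conditions (a) and (b), then invoking the local product decomposition for two complementary totally geodesic foliations---simply supplies the details the paper omits. Your reading of $D^{\perp}$ as $D^{T}$ is also the right interpretation of what is evidently a typo in the statement.
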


\section{Warped Products $M_{\theta} \times_f M_{T}$ in Kaehler Manifolds}

\setcounter{equation}{0}
\renewcommand{\theequation}{4.\arabic{equation}}  Let $(B, g_1)$ and $(F, g_2) $ be two
Riemannian manifolds, $f:B \rightarrow (0,\infty) $ and $\pi: B
\times F \rightarrow B$, $\eta:B \times F \rightarrow F$ the
projection maps given by $\pi (p,q)=p$ and $\eta (p,q)=q$ for every
$(p,q) \in B \times F$.  The warped product (\cite{BO})
$M=B\times_{f} F$ is the manifold $B \times F$ equipped with the
Riemannian structure such that
$$ g(X,Y)=g_1(\pi_* X,\pi_* Y)+(f o \pi)^2
g_2(\eta_* X,\eta_* Y)$$ for every $X$ and $Y$ of $M$, where $*$
denotes the tangent map. The function $f$ is called the warping
function of the warped product manifold. In particular, if the
warping function is constant, then the warped product manifold $M$ is said to be
trivial. \\

 Let $ X, Y$ be vector fields on $B$ and $V,W$ vector
fields on $F$, then from Lemma~7.3 of \cite{BO}, we have
\begin{equation}
\nabla_X V=\nabla_V X=(\frac{Xf}{f})V \label{eq:4.1}
\end{equation}
where $\nabla$ is the Levi-Civita connection on $M$.\\

In this section we investigate  the existence of non-trivial warped
product submanifolds $M_{\theta} \times_f M_{T}$ of Kaehler
manifolds such that $M_{T}$ is a holomorphic submanifold and
$M_{\theta}$ is a proper pointwise slant submanifold of $\bar{M}$.

 \begin{thm} Let $\bar{M}$ be a Kaehler
 manifold. Then there exist no non-trivial warped product submanifolds $M=M_{\theta} \times_f M_{T}$ of a Kaehler manifold $\bar{M}$ such that
 $M_{T}$ is a holomorphic submanifold and $M_{\theta}$ is a
 proper pointwise slant submanifold of $\bar{M}$. \end{thm}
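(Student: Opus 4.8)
The plan is to show that a warped product of the form $M=M_\theta\times_f M_T$ forces the warping function $f$ to be constant, contradicting non-triviality. The strategy follows the standard Chen-type computation: compare the two expressions one gets for $g(\bar\nabla_X JY, \cdot)$ by moving $J$ across using the Kaehler condition, then exploiting the warped-product connection formula (\ref{eq:4.1}). Here $M_\theta$ is the base $B$ and $M_T$ is the fiber $F$, so for $X,Y\in\Gamma(TM_T)=\Gamma(D^T)$ and $V\in\Gamma(TM_\theta)=\Gamma(D^\theta)$ we have $\nabla_V X=(Vf/f)X$, and crucially the base-fiber roles mean $Xf=0$ for $X$ tangent to the holomorphic fiber $M_T$.

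First I would compute $g(\nabla_X Y, V)$ for $X,Y\in\Gamma(D^T)$ and $V\in\Gamma(D^\theta)$ in two ways. On the one hand, since $JD^T=D^T$, I can write $g(\bar\nabla_X Y,V)=g(J\bar\nabla_X JY, V)=-g(\bar\nabla_X JY, JV)$ using (\ref{eq:2.1}), and then split $JV=TV+FV$ with $TV\in\Gamma(D^\theta)$, $FV\in\Gamma(TM^\perp)$, converting the normal piece into second-fundamental-form terms via the Weingarten formula (\ref{eq:2.3}); this is exactly the kind of manipulation carried out in the proof of Theorem~3.4. On the other hand, the warped-product formula (\ref{eq:4.1}) gives a direct handle on $g(\nabla_X Y, V)$ and $g(\nabla_V X, Y)$ in terms of $Vf/f$. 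The key algebraic input is that replacing $Y$ by $JY$ and using the holomorphicity of $D^T$ lets me produce a second relation; adding or subtracting the symmetric and skew parts in $X,Y$ should isolate the warping term.

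The decisive step is to feed in a holomorphic pair: taking $Y$ and then $JY$ (both in $D^T$) and using $g(h(X,JY),FV)$-type symmetry together with (\ref{eq:3.6}), i.e.\ $T^2=-(\cos^2\theta)I$, I expect the second-fundamental-form contributions to cancel and leave a clean identity of the form $(\sin^2\theta)\,(Xf)\,g(\text{something})=0$ or directly $(Xf)=0$ forced for all $X\in\Gamma(D^T)$. Because $M_\theta$ is the \emph{base}, the warping function depends only on $M_\theta$, so automatically $Xf=0$ for $X\in\Gamma(D^T)$; the real content is therefore to show the complementary derivatives $Vf$ also vanish. To get that I would instead compute $g(\nabla_V X, X)$ for $X\in\Gamma(D^T)$: by (\ref{eq:4.1}) this equals $(Vf/f)\,g(X,X)$, while an independent evaluation using the Kaehler identity and $g(\bar\nabla_V JX, JX)=g(\bar\nabla_V X,X)$ should pin $Vf$ down.

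The main obstacle I anticipate is organizing the cancellation so that the slant function $\theta$ appears only through $\sin^2\theta$ (which is nonzero since $M_\theta$ is \emph{proper} pointwise slant, hence has no complex points) and does not get differentiated in a way that reintroduces uncontrolled terms; unlike the constant-slant semi-slant case, $\theta$ is now a function, so any step that implicitly used $\nabla T=0$ or constancy of $\theta$ must be replaced by a computation valid pointwise. Concretely, I would be careful that identities derived from (\ref{eq:3.6}) are applied algebraically at each point rather than differentiated. Once $Vf=0$ for all $V\in\Gamma(D^\theta)$ is established alongside the automatic $Xf=0$, the gradient of $f$ vanishes identically, so $f$ is constant and the warped product is trivial, completing the proof.
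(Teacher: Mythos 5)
Your overall strategy---move $J$ across $\bar{\nabla}$ via the Kaehler condition, split $JV=TV+FV$, and play the warped-product formula (\ref{eq:4.1}) against second-fundamental-form terms---is the same family of manipulations the paper uses, and your first computation, carried out correctly, yields exactly the paper's key identity $g(h(X,JY),FV)=-V(\ln f)g(X,Y)+TV(\ln f)g(X,JY)$, i.e.\ (\ref{eq:4.3}). But you have misdiagnosed which derivative that computation controls. Since $V$ is tangent to the base $M_{\theta}$, the term injected by (\ref{eq:4.1}) is $\nabla_X V=(Vf/f)X$, so the derivatives that appear are $V(\ln f)$ and $TV(\ln f)$, not $X(\ln f)$; the identity is therefore not the vacuous statement $Xf=0$ you predict, but precisely the nontrivial information you then go hunting for elsewhere. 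Worse, the step you actually propose for the ``real content''---comparing $g(\nabla_V X,X)=(Vf/f)g(X,X)$ with an evaluation of $g(\bar{\nabla}_V JX,JX)=g(\bar{\nabla}_V X,X)$---is genuinely empty: both sides equal $\tfrac12 V\|X\|^2=(Vf/f)\|X\|^2$ by metric compatibility alone (note $JX\in\Gamma(D^T)$ too), so no condition on $Vf$ results.

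What is missing is the closing mechanism. The paper combines (\ref{eq:4.3}) with the symmetry relation $g(h(JX,Y),FV)=g(h(X,JY),FV)$ (equation (\ref{eq:4.2}), obtained from a separate computation together with the symmetry of $h$) to get $TV(\ln f)\,g(X,JY)=0$, and then---this is the step your sketch has no analogue of---substitutes $V\mapsto TV$ and $X\mapsto JX$ and invokes $T^2=-(\cos^2\theta)I$ to arrive at $\cos^2\theta\,V(\ln f)\,g(X,Y)=0$. The conclusion $V(\ln f)=0$ then requires $\cos\theta\neq 0$, i.e.\ the absence of totally real points, which is exactly what ``pointwise proper slant'' means in this paper. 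You instead assert that properness excludes complex points and that $\sin^2\theta\neq 0$ is the relevant nonvanishing; that is both a misreading of the definition and the wrong trigonometric factor for the final division. So the skeleton is right, but as written the decisive step does not close the argument.
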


 \begin{proof} From (\ref{eq:4.1}), (\ref{eq:2.2}), (\ref{eq:2.3}), (\ref{eq:2.4}) and (\ref{eq:2.5}) we
 have
$$
 V(lnf)g(X,Y)=-g(\bar{\nabla}_XT^2V+FTV,Y)-g(A_{FV}X,JY).$$
Using (\ref{eq:3.6}) we get
$$
 V(lnf)g(X,Y)=g(\bar{\nabla}_X\cos^2\,\theta V,Y)-g(\bar{\nabla}_XFTV,Y)-g(A_{FV}X,JY).$$
Thus from (\ref{eq:2.3}) and (\ref{eq:2.4}) we obtain
\begin{eqnarray}
 V(lnf)g(X,Y)&=&-\sin\,2\theta X(\theta)g(V,Y)+\cos^2\,\theta g(\nabla_XV,Y)\nonumber\\
 &&+g(h(X,Y),FTV)-g(h(X,JY),FV).\nonumber
\end{eqnarray}
Since $D^T$ and $D^{\theta}$ are orthogonal, using (\ref{eq:4.1}) we arrive at
 $$
 \sin^2\,\theta V(lnf)g(X,Y)=g(h(X,Y),FTV)-g(h(X,JY),FV).$$
  Interchanging the role of $X$ and $Y$ in above equation and then subtracting each other,  we derive
 \begin{equation}
 g(h(JX,Y),FV)=g(h(X,JY),FV).\label{eq:4.2}
 \end{equation}
 On the other hand, from (\ref{eq:2.3}), (\ref{eq:2.1}), (\ref{eq:2.5}) and (\ref{eq:4.1}) we have
 \begin{equation}
 g(h(X,JY),FV)=-V(lnf)g(X,Y)+TV(lnf)g(X,JY).\label{eq:4.3}
 \end{equation}
 Then from (\ref{eq:4.2}) and (\ref{eq:4.3}) we conclude
 $$TV(lnf)g(X,JY)=0.$$
Replacing $V$ by $TV$ and $X$ by $JX$ we find
$$\cos^2\,\theta V(lnf)g(X,Y)=0$$
which implies
$$V(lnf)=0$$
due to $M_{\theta}$ is proper pointwise slant and $M_T$ is a Riemannian manifold. Thus it follows that $f$ is a constant.

\end{proof}

\begin{rem} We note that Theorem~4.1 is a
generalization of Theorem~3.1 in \cite{CCR1} and Theorem 3.1 in \cite{Sahin}.\end{rem}

\section{ Non-trivial Warped Products $ M_T  \times_f M_{\theta}$ in Kaehler Manifolds}
\setcounter{equation}{0}
\renewcommand{\theequation}{5.\arabic{equation}} Theorem~4.1 shows that there do not exist non-trivial warped product pointwise semi-slant submanifolds of the form $M_{\theta} \times_f
M_{T}$ in Kaehler manifolds. In this section,  we consider
non-trivial warped product pointwise semi-slant submanifolds of the
form $M_{T}\times_f M_{\theta}$, where $M_{T}$ is a holomorphic
submanifold  and $M_{\theta}$ is a proper pointwise slant
submanifold of $\bar{M}$.  First, we are going to give an example of
non-trivial warped product pointwise semi-slant submanifold of the form $ M_T \times _f M_{\theta}$. \\

\begin{exa}For $t,s\neq 0,1, u,v\in (0,\frac{\pi}{2})$, consider a submanifold $M$ in $R^{10}$
given by the equations
\begin{eqnarray}
x_1=t\cos\,u, x_2=s\cos\, u&,& x_3=t\cos\, v, x_4= s\cos\,v, x_5=t\sin\,u\nonumber\\
x_6=s\sin\,u, x_7=t\sin\,v&,&  x_8=s\sin\,v, x_9=u, x_{10}=v.\nonumber
\end{eqnarray}

Then the tangent bundle $TM$ is spanned by $Z_1, Z_2, Z_3$ and $Z_4$
where
\begin{eqnarray}
Z_1&=& \cos u\,\frac{\partial}{\partial x_1} +\cos v \,
\frac{\partial}{\partial x_3}+\sin u\, \frac{\partial}{\partial x_5}+\sin v\, \frac{\partial}{\partial x_7}\nonumber\\
Z_2&=& \cos u\,\frac{\partial}{\partial x_2} +\cos v \,
\frac{\partial}{\partial x_4}+\sin u\, \frac{\partial}{\partial x_6}+\sin v\, \frac{\partial}{\partial x_8}\nonumber\\
Z_3&=& -t \, \sin u \, \frac{\partial}{\partial
x_1}-s\,\sin u\,\frac{\partial}{\partial x_2} +t\,\cos u
\frac{\partial}{\partial x_5}+ s\,\cos
u\,\frac{\partial}{\partial x_6}+\frac{\partial}{\partial x_9} \nonumber\\
Z_4&=& -t \, \sin v \, \frac{\partial}{\partial
x_3}-s\,\sin v\,\frac{\partial}{\partial x_4} +t\,\cos v
\frac{\partial}{\partial x_7}+ s\,\cos
v\,\frac{\partial}{\partial x_8}+\frac{\partial}{\partial x_{10}}. \nonumber
\end{eqnarray}
Then $D^T=span \{Z_1,Z_2 \}$ is a holomorphic distribution
and $D^{\theta}=span\{Z_3, Z_4 \}$ is a pointwise slant distribution with
the slant function ${\cos^{-1} (\frac{1}{t^2+s^2+1}})$.  Thus
$M$ is a pointwise semi-slant submanifold of $R^{10}$.  It is easy to see that
$D^{\theta}$ and $D^T$ are integrable. We denote the integral manifolds of
$D^{T}$ and $D^{\theta}$ by $M_{T}$ and $M_{\theta}$,
respectively. Then  the metric tensor $g$ of $M$ is
\begin{eqnarray}
g&=&2dx^2_1+2d^2_2+(t^2+s^2+1)(dx^2_3+dx^2_4)\nonumber\\
&=&g_{M_{T}}+(t^2+s^2+1)g_{M_{\theta}}. \nonumber
\end{eqnarray}
Thus $M$ is a non-trivial warped product submanifold of $R^{10}$ of
the form $M_T\times_f M_{\theta}$ with warping function
$\sqrt{(t^2+s^2+1)}$. \end{exa}

\begin{rem}Non-trivial warped product pointwise semi-slant submanifolds of the form $M_T\times_fM_{\theta}$ are natural extension of warped product CR-submanifolds. Indeed, every CR-warped
product submanifold is a non-trivial warped product pointwise
semi-slant submanifold of the form $M_T \times_f M_{\theta}$ with
the slant function $\theta=0$.
\end{rem}

From now on, we will consider non-trivial warped product pointwise
semi-slant submanifold $M=M_T \times_f M_{\theta}$ such that
$M_{\theta}$ is a proper pointwise slant submanifold and $M_T$ is a
holomorphic submanifold. First we give some preparatory lemmas.

\begin{lem} Let $M=M_T \times_f
M_{\theta}$ be a non-trivial warped product pointwise proper
semi-slant submanifold of a Kaehler manifold $\bar{M}$. Then we have
\begin{equation}
g(A_{FV}W,X)=g(A_{FW}V,X)\label{eq:5.1}
\end{equation}
for $V,W\in \Gamma(D^{\theta})$ and $X\in \Gamma(D^T)$.
\end{lem}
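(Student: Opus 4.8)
The plan is to reduce the claimed symmetry to a symmetry of the second fundamental form and then extract that symmetry from the Kaehler and warped-product structures. By the shape-operator identity (\ref{eq:2.4}), the assertion (\ref{eq:5.1}) is equivalent to
$$g(h(X,W),FV)=g(h(X,V),FW)$$
for $V,W\in\Gamma(D^{\theta})$ and $X\in\Gamma(D^{T})$, so I would work throughout with this symmetric formulation in terms of $h$, computing one side and showing it equals the other.

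First I would rewrite $g(h(X,V),FW)$ using the Gauss formula (\ref{eq:2.2}): since $FW$ is normal while $\nabla_XV$ is tangent, $g(h(X,V),FW)=g(\bar{\nabla}_XV,FW)$. Next I would substitute $FW=JW-TW$ from (\ref{eq:2.5}) and treat the two pieces separately. For the $TW$-part, which is again tangent to $M_{\theta}$ because $T$ maps $D^{\theta}$ into $D^{\theta}$, the Gauss formula combined with the warped-product relation (\ref{eq:4.1}), $\nabla_XV=(X\ln f)V$, gives $g(\bar{\nabla}_XV,TW)=(X\ln f)\,g(V,TW)$, the second-fundamental-form contribution dropping out since $TW$ is tangent.

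For the $JW$-part I would invoke the Kaehler conditions (\ref{eq:2.1}) to move $J$ across the metric and the parallelism of $J$ across the connection, obtaining $g(\bar{\nabla}_XV,JW)=-g(\bar{\nabla}_X JV,W)$, and then expand $JV=TV+FV$ via (\ref{eq:2.5}). The term $\bar{\nabla}_X TV$ is handled exactly as above, yielding $(X\ln f)\,g(TV,W)$, while the term $\bar{\nabla}_X FV$ is handled by the Weingarten formula (\ref{eq:2.3}), yielding $-g(A_{FV}X,W)=-g(h(X,W),FV)$. Collecting everything I would arrive at
$$g(h(X,V),FW)=g(h(X,W),FV)-(X\ln f)\bigl(g(TV,W)+g(V,TW)\bigr).$$

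The concluding step, and the point that makes the warping contributions disappear, is the skew-symmetry of $T$: since $g(TV,W)=g(JV,W)$ and $g(V,TW)=g(V,JW)$ for tangent vectors, the metric compatibility $g(JV,W)=-g(V,JW)$ from (\ref{eq:2.1}) forces $g(TV,W)+g(V,TW)=0$. Hence the bracketed term vanishes and the desired symmetry, and thus (\ref{eq:5.1}), follows. The main obstacle I anticipate is bookkeeping rather than any deep difficulty: keeping the tangential and normal decompositions straight so that the $h$-terms paired against tangent vectors are correctly discarded and the single Weingarten term is correctly retained, and checking that (\ref{eq:4.1}) is applied in the needed form, with $X(\ln f)$ acting only through the base $M_T$ while $V,W,TV,TW$ all lie in the fiber $M_{\theta}$.
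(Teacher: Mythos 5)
Your proposal is correct and follows essentially the same route as the paper: both arguments convert the shape-operator identity into a symmetry of $g(h(\cdot,\cdot),F\cdot)$, expand $F=J-T$ using the Kaehler parallelism of $J$ together with the Gauss and Weingarten formulas, apply the warped-product relation (\ref{eq:4.1}) to the tangential terms, and observe that the resulting $(X\ln f)\bigl(g(TV,W)+g(V,TW)\bigr)$ contribution cancels by the skew-symmetry of $T$. The paper leaves that last cancellation implicit, whereas you spell it out; otherwise the two computations are mirror images of one another.
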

\begin{proof}
Using (\ref{eq:2.1}), (\ref{eq:2.2}) and (\ref{eq:2.5}) we have
$$g(A_{FV}X,W)=g(\nabla_XV,TW)+g(\bar{\nabla}_XV,FW )+g(\nabla_XTV,W)$$
for $X\in \Gamma(D^T)$ and $V,W \in \Gamma(D^{\theta})$. Then from (\ref{eq:4.1}) and (\ref{eq:2.2}) we obtain
$$g(A_{FV}X,W)=g(h(X,V),FW)$$
which gives the assertion.
\end{proof}
\begin{lem} Let $M=M_T \times_f
M_{\theta}$ be a non-trivial warped product pointwise semi-slant
submanifold of a Kaehler manifold $\bar{M}$. Then we have
\begin{equation}
g(A_{FTW}V,X)=-JX(lnf)g(TW,V)-X(lnf)\cos^2\theta g(V,W)\label{eq:5.2}
\end{equation}
and
\begin{equation}
g(A_{FW}V,JX)=X(lnf)g(W,V)+JX(lnf) g(V,TW)\label{eq:5.3}
\end{equation}
for $V,W\in \Gamma(D^{\theta})$ and $X\in \Gamma(D^T)$.
\end{lem}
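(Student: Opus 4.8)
My goal is to establish the two identities \eqref{eq:5.2} and \eqref{eq:5.3} relating the shape operators $A_{FTW}$ and $A_{FW}$ to the warping function $f$. The natural starting point is to expand a covariant derivative of the form $g(\bar\nabla_X (JW), \cdot)$ using the Kaehler condition \eqref{eq:2.1}, then split $JW = TW + FW$ via \eqref{eq:2.5} and apply the Gauss--Weingarten formulas \eqref{eq:2.2}, \eqref{eq:2.3} together with the warped-product relation \eqref{eq:4.1}. The plan is to compute $g(A_{FTW}V,X)$ directly. I would write $g(A_{FTW}V,X)=g(h(V,X),FTW)=g(\bar\nabla_V X,FTW)$ and then transfer $F$ across using $g(\bar\nabla_V X,FTW)=g(\bar\nabla_V X,JTW)-g(\bar\nabla_V X,TTW)$, since $JTW=TTW+FTW$. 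The term $g(\bar\nabla_V X,TTW)$ is handled by \eqref{eq:3.6}, which gives $T^2W=-\cos^2\theta\,W$, producing the $\cos^2\theta\,g(V,W)$ contribution after using \eqref{eq:4.1} to write $\nabla_V X=(X\ln f)V$.

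For the remaining piece $g(\bar\nabla_V X,JTW)$, I would use the Kaehler property $(\bar\nabla J)=0$ to rewrite it as $-g(J\bar\nabla_V X,TW)=-g(\bar\nabla_V (JX),TW)$, and then expand $JX$ over $D^T$ (noting $JX\in\Gamma(D^T)$ by \eqref{eq:3.3}). Applying \eqref{eq:4.1} again to $\nabla_V(JX)=((JX)\ln f)V$ yields the $JX(\ln f)\,g(TW,V)$ term, which supplies the first summand on the right of \eqref{eq:5.2}. Collecting these contributions, with attention to signs coming from $J^2=-I$ and from moving $J$ past $\bar\nabla$, should assemble exactly into \eqref{eq:5.2}. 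For \eqref{eq:5.3}, the approach is entirely parallel: I would start from $g(A_{FW}V,JX)=g(h(V,JX),FW)=g(\bar\nabla_V(JX),FW)$, replace $FW=JW-TW$, and again invoke \eqref{eq:2.1}, \eqref{eq:4.1}, and the decomposition of $JX$ into $D^T$; the two warping-function terms $X(\ln f)g(W,V)$ and $JX(\ln f)g(V,TW)$ emerge from the two components of $JW$.

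The main obstacle I anticipate is bookkeeping rather than conceptual: each identity hinges on correctly tracking which covariant derivatives become tangential (hence feed into $\nabla$ and the warped-product formula \eqref{eq:4.1}) versus normal (hence feed into $h$ and the shape operator), and on the sign produced when $T^2$ is replaced by $-\cos^2\theta\,I$ via \eqref{eq:3.6}. A secondary subtlety is ensuring that $TW\in\Gamma(D^\theta)$ and $JX\in\Gamma(D^T)$ so that the orthogonality of $D^T$ and $D^\theta$ eliminates the cross terms; this is exactly what \eqref{eq:3.3}, \eqref{eq:3.4} and Corollary~3.1 guarantee. Once the splittings are organized cleanly, both \eqref{eq:5.2} and \eqref{eq:5.3} should follow by direct substitution without any genuinely hard estimate.
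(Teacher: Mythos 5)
Your route is genuinely different from the paper's. The paper first invokes Lemma~5.1 to replace $g(A_{FTW}V,X)$ by $g(A_{FV}TW,X)=g(h(TW,X),FV)$ and then differentiates along $TW$, so that the two resulting terms $g(\nabla_{TW}JX,V)$ and $g(\nabla_{TW}X,TV)$ are handled by \eqref{eq:4.1} together with \eqref{eq:3.10}; \eqref{eq:5.3} is then deduced from \eqref{eq:5.2} by substitutions. You instead differentiate along $V$ and split $FTW=JTW-T^{2}W$ directly, which avoids Lemma~5.1 altogether. Conceptually both routes are elementary and your bookkeeping plan (which derivatives feed into $\nabla$ versus $h$, where \eqref{eq:3.6} enters) is sound.

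However, carried out, your computation does \emph{not} land on \eqref{eq:5.2} as printed: you get
$g(A_{FTW}V,X)=g(\bar{\nabla}_{V}X,JTW)-g(\nabla_{V}X,T^{2}W)=-JX(\ln f)\,g(V,TW)+\cos^{2}\theta\,X(\ln f)\,g(V,W)$,
i.e.\ a plus sign on the second term, and likewise \eqref{eq:5.3} comes out as $X(\ln f)g(V,W)-JX(\ln f)g(V,TW)$. This is not a slip on your side: your answer is exactly what the paper's own identity \eqref{eq:5.7} yields upon setting $W\mapsto TW$ and using $T^{2}W=-\cos^{2}\theta\,W$, whereas the printed signs in \eqref{eq:5.2}--\eqref{eq:5.3} come from routing through Lemma~5.1, whose proof applies \eqref{eq:4.1} to the field $TV$ as though it were the lift of a vector field on the fibre. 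A direct computation (or the closedness of the Kaehler form, which forces $Xg(V,TW)=0$) gives $g(A_{FV}W,X)-g(A_{FW}V,X)=2X(\ln f)\,g(V,TW)$, so Lemma~5.1 holds only when $T$ vanishes on $D^{\theta}$, i.e.\ in the CR case. So the concrete gap in your plan is the final sentence claiming everything ``should assemble exactly into \eqref{eq:5.2}'': it will not, and you must either correct the sign of the $\cos^{2}\theta$ term in the statement or switch to the paper's Lemma~5.1 route (and then confront that lemma's own difficulty). The discrepancy is harmless for Theorem~5.2, where only $\|h(X,V)\|^{2}$ is used and the cross term cancels.
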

\begin{proof}
From (\ref{eq:5.1}) we write $g(A_{FTW}V,X)=g(A_{FV}TW,X)$. Then using (\ref{eq:2.1}), (\ref{eq:2.2}), (\ref{eq:2.3}) and (\ref{eq:2.5}) we have
$$g(A_{FTW}X,W)=g(\nabla_{TW}V,JX)+g(\nabla_{TW}TV,X).$$
 Thus from (\ref{eq:4.1}) and (\ref{eq:3.10}) we obtain (\ref{eq:5.2}). (\ref{eq:5.2}) gives (\ref{eq:5.3}).
\end{proof}

In the sequel we give a characterization for non-trivial warped
product pointwise semi-slant submanifolds of the form  $M_{T}
\times_f M_{\theta}$.   Recall that we have the following result of
Hiepko \cite{Hiepko}, see also\cite{Dillen}:
 {\sl Let $D_1$ be a vector
subbundle in the tangent bundle of a Riemannian manifold $M$ and
$D_2$ be its normal bundle. Suppose that the two distributions are
involutive. We denote the integral manifolds of $D_1$ and $D_2$ by
$M_1$ and $M_2$, respectively. Then $M$ is locally isometric to
non-trivial warped product $M_1 \times_f M_2$ if the integral
manifold $M_1$ is totally geodesic and the integral manifold $M_2$
is an extrinsic sphere, i.e, $M_2$  is a totally
umbilical submanifold with parallel mean curvature vector.}

\begin{thm}Let $M$ be a pointwise semi-slant submanifold of a Kaehler manifold $\bar{M}$. Then $M$ is locally a non-trivial warped product manifold of the form $M=M_T \times_f M_{\theta}$ such that
$M_{\theta}$ is a proper pointwise slant submanifold and $M_T$ is a
holomorphic submanifold in $\bar{M}$ if the following condition is satisfied
\begin{equation}
A_{FTW}X-A_{FW}JX=-(1+\cos^2\,\theta)X(\mu)W\label{eq:5.4}
\end{equation}
where $\mu$ is a function on $M$ such that $W(\mu) = 0$ for every $W\in \Gamma(D^{\theta})$ and $X\in \Gamma(D^T)$.
\end{thm}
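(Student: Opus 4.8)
The plan is to exhibit $M$ as a warped product by checking the hypotheses of the Hiepko theorem quoted above, taking $D_1=D^T$ as the totally geodesic factor $M_T$ and $D_2=D^\theta$ as the spheric factor $M_\theta$. Observe first that $W(\mu)=0$ for every $W\in\Gamma(D^\theta)$ says $g(\mathrm{grad}\,\mu,W)=0$, so $\mathrm{grad}\,\mu\in\Gamma(D^T)$; this is the vector field that will generate the warping function.

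First I would show that $D^T$ is a totally geodesic foliation. The right-hand side of (\ref{eq:5.4}) lies in $\Gamma(D^\theta)$, so pairing (\ref{eq:5.4}) with $Y\in\Gamma(D^T)$ annihilates it and, after rewriting each shape operator through $g(A_N\,\cdot\,,\cdot)=g(h(\cdot,\cdot),N)$, leaves
$$g(h(X,Y),FTW)=g(h(JX,Y),FW),\qquad X,Y\in\Gamma(D^T),\ W\in\Gamma(D^\theta).$$
Since this holds for all $X,Y\in\Gamma(D^T)$, interchanging $X$ and $Y$ and using the symmetry of $h$ puts it in the form of Theorem~3.4(a). Hence $D^T$ is integrable and its leaves $M_T$ are totally geodesic holomorphic submanifolds of $\bar M$.

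Next I would analyse the slant distribution. The computational heart is an identity for the $D^T$-part of $\nabla_VW$: combining the Gauss and Weingarten formulas (\ref{eq:2.2})--(\ref{eq:2.3}), the Kaehler condition $\bar\nabla J=0$, and $T^2=-\cos^2\theta\,I$ on $D^\theta$ from (\ref{eq:3.6}), one obtains
$$\sin^2\theta\,g(\nabla_VW,X)=g(A_{FTW}V,X)-g(A_{FW}V,JX)$$
for $V,W\in\Gamma(D^\theta)$ and $X\in\Gamma(D^T)$. Pairing (\ref{eq:5.4}) with $V$ (and using $g(A_N P,Q)=g(A_N Q,P)$) gives $g(A_{FTW}V,X)-g(A_{FW}V,JX)=-(1+\cos^2\theta)X(\mu)g(V,W)$, so that
$$g(\nabla_VW,X)=-\frac{1+\cos^2\theta}{\sin^2\theta}\,X(\mu)\,g(V,W).$$
This expression is symmetric in $V$ and $W$, whence $g([V,W],X)=0$ and $D^\theta$ is integrable; moreover it shows that each leaf $M_\theta$ is totally umbilical in $M$, its mean curvature vector $H\in\Gamma(D^T)$ being a pointwise multiple $\kappa\,\mathrm{grad}\,\mu$ of the gradient of $\mu$.

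Finally I would verify that $M_\theta$ is an \emph{extrinsic sphere}, i.e.\ that $H$ is parallel in the normal bundle ($=D^T$) of the leaf. Writing $H=\kappa\,\mathrm{grad}\,\mu$, one has $\nabla^\perp_VH=V(\kappa)\,\mathrm{grad}\,\mu+\kappa\,(\nabla_V\,\mathrm{grad}\,\mu)^{D^T}$. The second term vanishes: with $Y=\mathrm{grad}\,\mu\in\Gamma(D^T)$ its $D^T$-component is $\mathrm{Hess}\,\mu(V,X)=g(\nabla_V Y,X)=\mathrm{Hess}\,\mu(X,V)=-g(Y,\nabla_XV)$ by symmetry of the Hessian and $V(\mu)=0$, and $g(Y,\nabla_XV)=-g(\nabla_XY,V)=0$ because $\nabla_XY\in\Gamma(D^T)$ by the total geodesy of $D^T$ just established. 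Once $\nabla^\perp H=0$ is secured, $D^T$ is totally geodesic and $D^\theta$ is spheric, and Hiepko's theorem yields that $M$ is locally the non-trivial warped product $M_T\times_fM_\theta$, the warping function $f$ being provided by the construction via $H=-\mathrm{grad}(\ln f)$. I expect this last step to be the main obstacle: total umbilicity of the slant leaves falls straight out of (\ref{eq:5.4}), but the \emph{parallelism} of $H$ additionally requires the factor $\kappa$ to be constant along each leaf — equivalently that the slant function $\theta$ be constant along $D^\theta$ — in tandem with the vanishing of the mixed Hessian of $\mu$, and reconciling these is where the care is needed.
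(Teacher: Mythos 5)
Your argument is, step for step, the paper's own proof: pairing (5.4) with vectors in $D^T$ yields the condition of Theorem~3.4(a) and hence the total geodesy of the leaves of $D^T$; the identity $\sin^2\theta\, g(\nabla_VW,X)=g(A_{FTW}V,X)-g(A_{FW}V,JX)$ combined with (5.4) gives integrability of $D^{\theta}$ and total umbilicity of its leaves with mean curvature vector $-(\csc^2\theta+\cot^2\theta)\nabla\mu$ (note $\frac{1+\cos^2\theta}{\sin^2\theta}=\csc^2\theta+\cot^2\theta$); and the Hessian computation $g(\nabla_V\nabla\mu,X)=g(\nabla\mu,\nabla_XV)=0$ is exactly the paper's verification of the spherical condition before invoking Hiepko. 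The one issue you flag as the remaining obstacle is real, but it is also precisely the point the paper passes over in silence: the published proof establishes only that $(\nabla_V\nabla\mu)^{D^T}=0$ and then declares $M_{\theta}$ an extrinsic sphere, without ever addressing whether the scalar factor $\csc^2\theta+\cot^2\theta$ is constant along the leaves of $D^{\theta}$, i.e.\ whether $V(\theta)=0$ for $V\in\Gamma(D^{\theta})$, which is what parallelism of the full mean curvature vector requires. So your proof is neither more nor less complete than the paper's; to genuinely close it one must either deduce $V(\theta)=0$ from (5.4) or impose it, and you are right that this is where the care is needed.
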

\begin{proof} Let $M=M_T \times_f
M_{\theta}$ be a non-trivial warped product pointwise semi-slant
submanifold of a Kaehler manifold $\bar{M}$. Then from
(\ref{eq:2.1}), (\ref{eq:2.3}) and (\ref{eq:2.5}) we obtain
$$g(A_{FV}X,Y)=g(\nabla_XV,JY)+g(\nabla_XTV,Y)$$
for $X,Y\in \Gamma(D^T)$ and $V\in \Gamma(D^{\theta})$. Then using (\ref{eq:4.1}) we derive
$$g(A_{FV}X,Y)=0$$
which shows that $A_{FV}X$ belongs to $D^{\theta}$. Conversely, suppose that $M$ is a pointwise semi-slant submanifold of a Kaehler manifold $\bar{M}$ such that
\begin{equation}
A_{FTW}X-A_{FW}JX=-(1+\cos^2\,\theta)X(\mu)W\label{eq:500.1}
\end{equation} for
$W\in \Gamma(D^{\theta})$ and $X\in \Gamma(D^T)$. Then from Theorem
3.3 (ii), $D^{\theta}$ is integrable.  Also from Theorem 3.4 (b), we find
that the integral manifold $M_T$ of $D^T$ is totally geodesic. Let
$M_{\theta}$ be the integral manifold of $D^{\theta}$ and denote the
second fundamental form of $M_{\theta}$ in $M$ by $h_{\theta}$.
Since Weingarten operator $A_N$ is self-adjoint, using
(\ref{eq:2.3}) we get
$$g(A_{FTV}X-A_{FV}JX,W)=-g(X,\bar{\nabla}_WFTV)+g(JX,\bar{\nabla}_WFV)$$
for $V, W\in \Gamma(D^{\theta})$ and $ X\in \Gamma(TM)$.
Then from (\ref{eq:2.1}), (\ref{eq:2.2}) and (\ref{eq:2.5}) we have
$$g(A_{FTV}X-A_{FV}JX,W)=g(X,\nabla_WT^2V)+g(X,\nabla_WV).$$
Thus from (\ref{eq:3.6}) we obtain
\begin{eqnarray}
g(A_{FTV}X-A_{FV}JX,W)&=&\sin\,2\theta W(\theta)g(X,V)-\cos^2\,\theta g(X,\nabla_WV)\nonumber\\
&&+g(X,\nabla_WV)\nonumber\\
&=&\sin^2\,\theta g(X,\nabla_WV).\nonumber
\end{eqnarray}
Hence we derive
\begin{equation}
g(A_{FTV}X-A_{FV}JX,W)=\sin^2\,\theta\,g(X,h_{\theta}(V,W)).\label{eq:5.5}
\end{equation}
Then (\ref{eq:5.5}) and (\ref{eq:500.1}) imply that
$$h_{\theta}(V,W)=-(\csc^2\,\theta+\cot^2\,\theta)\nabla\mu g(V,W)$$
which shows that $M_{\theta}$ is a totally umbilical submanifold in $M$ with the mean curvature vector field $-(\csc^2\,\theta+\cot^2\,\theta)\nabla\mu$, where $\nabla \mu$ is the gradient of $\mu$.  On the other hand,
by direct computations, we get
\begin{eqnarray}
g(\nabla_V  \nabla \mu,X)&=&[Vg(\nabla \mu, X)-g(\nabla \mu,
\nabla_V
X)]\nonumber\\
&=&[V(X(\mu))-[V,X]\mu-g(\nabla \mu, \nabla_X
V)]\nonumber\\
&=&[[V,X]\mu+X(V(\mu))-[V,X]\mu-g(\nabla \mu,\nabla_X
V)]\nonumber\\
&=&[X(V(\mu))-g(\nabla \mu,\nabla_X V)].\nonumber
\end{eqnarray}
Since $V(\mu)=0$, we obtain
$$g(\nabla_V  \nabla \mu,X)=g(\nabla \mu,\nabla_X V).$$ On the other hand, since  $\nabla \mu
\in \Gamma(TM_{T})$ and $M_{T}$ is totally geodesic in
$M,$ it follows that $\nabla_X V \in \Gamma(TM_{\theta})$ for $X \in
\Gamma(D^{T})$ and $V \in \Gamma(D^{\theta}) $.  Hence $g(\nabla_V
 \nabla \mu,X)=0$.  Then the spherical condition is also
fulfilled, that is $M_{\theta}$ is an extrinsic sphere in $M$.
 Thus we conclude that $M$ is a non-trivial warped product and proof is
 complete.\end{proof}
We now give an inequality in terms of the length of the second fundamental form. First we give  a lemma which will be useful for the theorem.
\begin{lem}Let $M=M_T \times_f
M_{\theta}$ be a non-trivial warped product pointwise semi-slant
submanifold of a Kaehler manifold $\bar{M}$. Then we have
\begin{equation}
g(h(X,Y),FV)=0\label{eq:5.6}
\end{equation}
and
\begin{equation}
g(h(X,V),FW)=-JX(lnf)g(V,W)-X(lnf)g(V,TW)\label{eq:5.7}
\end{equation}
for $V,W\in \Gamma(D^{\theta})$ and $X,Y \in \Gamma(D^T)$.
\end{lem}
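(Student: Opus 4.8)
To prove Lemma 5.4, I would compute the two quantities $g(h(X,Y),FV)$ and $g(h(X,V),FW)$ directly from the Gauss formula together with the warped-product connection identity~(\ref{eq:4.1}), exactly in the style of the preceding lemmas in this section.  The essential mechanism is that for a warped product $M_T\times_f M_\theta$ the covariant derivative $\nabla_XV$ of a slant-direction field $V$ along a holomorphic-direction field $X$ is forced by~(\ref{eq:4.1}) to equal $(X\ln f)V$, and in particular $g(\nabla_XY,V)=0$ for $X,Y\in\Gamma(D^T)$ since $M_T$ is totally geodesic.  These facts let me convert second-fundamental-form pairings into warping-function terms.

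\medskip
\textbf{Proof of (\ref{eq:5.6}).}  Let $X,Y\in\Gamma(D^T)$ and $V\in\Gamma(D^\theta)$.  I would first write, using the Kaehler condition~(\ref{eq:2.1}) and (\ref{eq:2.5}),
\begin{equation}
g(h(X,Y),FV)=g(\bar\nabla_XY,FV)=g(\bar\nabla_XY,JV)-g(\bar\nabla_XY,TV).\nonumber
\end{equation}
Then $g(\bar\nabla_XY,JV)=-g(J\bar\nabla_XY,V)=-g(\bar\nabla_X JY,V)$ by~(\ref{eq:2.1}), and since $JY\in\Gamma(D^T)$ and $M_T$ is totally geodesic this pairing against $V\in\Gamma(D^\theta)$ vanishes; likewise $g(\bar\nabla_XY,TV)$ vanishes because $\nabla_XY\in\Gamma(D^T)$ is orthogonal to $TV\in\Gamma(D^\theta)$.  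This yields~(\ref{eq:5.6}).

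\medskip
\textbf{Proof of (\ref{eq:5.7}).}  For the second identity I would start from
\begin{equation}
g(h(X,V),FW)=g(\bar\nabla_VX,JW)-g(\bar\nabla_VX,TW),\nonumber
\end{equation}
using symmetry of $h$ to write $\bar\nabla_VX$, then apply~(\ref{eq:2.1}) to the first term to get $g(\bar\nabla_VX,JW)=-g(\bar\nabla_V JX,W)$, and use~(\ref{eq:4.1}) (with $JX\in\Gamma(D^T)$ playing the role of a base field) to replace $\nabla_VJX$ by $(JX\ln f)V$, giving $-JX(\ln f)g(V,W)$.  For the second term, $\nabla_VX=(X\ln f)V$ again by~(\ref{eq:4.1}), so $-g(\nabla_VX,TW)=-X(\ln f)g(V,TW)$.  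Collecting the two contributions gives~(\ref{eq:5.7}).

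\medskip
\textbf{Main obstacle.}  The computation itself is routine; the one point that needs care is the bookkeeping of which field is tangent to the base $M_T$ and which to the fiber $M_\theta$, since~(\ref{eq:4.1}) is valid only for $X$ tangent to $B=M_T$ and $V$ tangent to $F=M_\theta$.  The delicate step is the application of~(\ref{eq:4.1}) to $JX$: this is legitimate precisely because $D^T$ is holomorphic, so $JX\in\Gamma(D^T)=\Gamma(TM_T)$, and thus $\nabla_V(JX)=((JX)\ln f)V$.  Making sure the tangential/normal projections in~(\ref{eq:2.5}) and~(\ref{eq:2.6}) are applied to the correct factor, and that cross terms such as $g(h(X,Y),TV)$ drop out by the total-geodesy of $M_T$, is where the argument could go wrong if done carelessly.
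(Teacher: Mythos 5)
Your proof is correct and follows essentially the same route as the paper: for (5.6) it is the identical decomposition $FV=JV-TV$ combined with the Kaehler condition and the warped-product formula (4.1), and for (5.7) you simply carry out directly the computation that the paper packages into Lemma 5.2 (equations (5.2)--(5.3)) and then cites. The point you single out as delicate --- that (4.1) may be applied to $JX$ because $D^T$ is holomorphic, so $JX$ is again tangent to the base --- is precisely the step the paper's argument relies on as well.
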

\begin{proof}
From (\ref{eq:2.5}), (\ref{eq:2.1}) and (\ref{eq:2.2}) we get
$$g(h(X,Y),FV)=-g(\nabla_XJY,V)-g(\nabla_XY,TV).$$
Since $D^T$ and $D^{\theta}$ are orthogonal, using (\ref{eq:4.1}) we derive
$$g(h(X,Y),FV)=X(lnf)g(V,JY)+X(lnf)g(TV,Y)=0$$
which gives (\ref{eq:5.6}). (\ref{eq:5.7}) comes from (\ref{eq:5.2}) and (\ref{eq:5.3}).
\end{proof}

Let $M$ be an $(m+n)$ dimensional proper pointwise semi-slant
submanifold of a Kaehler manifold $\bar{M}^{m+2n}$, where $\bar{M}$
is of real dimension $m+2n$ and it is obvious that $m$ is also even.
Then we choose a canonical orthonormal frame $\{e_1,...,e_m,
\bar{e}_1,...,\bar{e}_n, e^*_{1},...,e^*_{n}\}$ of $\bar{M}$ such
that, restricted to $M$, $e_1,...,e_m, \bar{e}_1,...,\bar{e}_n$ are
tangent to $M$. Then $\{e_1,...,e_m, \bar{e}_1,...,\bar{e}_n\}$ form
an orhonormal frame of $M$. We can take $\{e_1,...,e_m,
\bar{e}_1,...,\bar{e}_n\}$ in such a way that   $\{ e_1,...,e_m \}$
form an orthonormal frame of $D^T$ and $\{
\bar{e}_{1},..,\bar{e}_{n}\}$ form an orhonormal frame of
$D^{\theta}$, where $dim(D^T)=m$ and $dim(D^{\theta})=n$. We can
take $\{e^*_{1},...,e^*_{n}\}$ as an orthonormal frame of
$F(D^{\theta})$. It is known that a proper pointwise slant
submanifold is always even dimensional. Hence, $n=2p$. Then we can
choose orthonormal frames  $\{ \bar{e}_1,..,\bar{e}_{2p}\}$ of
$D^{\theta}$ and $\{ e^*_{1},...,e^*_{n}\}$ of  $F(D^{\theta})$ in
such a way that
\begin{eqnarray}
\bar{e}_{2}=\sec  \theta\, T\bar{e}_{1},.&.&.,\bar{e}_{2p}=\sec
\theta\, T\bar{e}_{2p-1}
\nonumber\\
e^*_{1}=\csc \theta F\bar{e}_{1},.&.&.,e^*_{2p}=\csc \theta\,
F\bar{e}_{2p}, \nonumber
\end{eqnarray}
where $\theta$ is the slant function. We will call this orthonormal frame an adapted frame as for slant submanifold case \cite{CB}.

\begin{thm} Let $M$ be an ($m+n$)-dimensional
non-trivial warped product pointwise semi-slant  submanifold of the form $M_T\times_f
M_{\theta}$ in a Kaehler manifold $\bar{M}^{m+2n}$, where $M_{T}$ is
a holomorphic submanifold and $M_{\theta}$ is a proper pointwise
slant submanifold of $\bar{M}^{m+2n}$. Then we have
\begin{enumerate}
  \item [(i)]{\sl The squared norm of the second fundamental form of M
  satisfies}
  \begin{equation}
  \parallel h \parallel^2 \geq 2n\, (\csc^2\,\theta+\cot^2\theta)\, \parallel \nabla (\ln f)
  \parallel^2, \quad dim(M_{\theta})=n.\label{eq:5.8}
  \end{equation}
  \item [(ii)] {\sl If the equality  of (\ref{eq:5.8})  holds identically, then $M_T$  is a totally geodesic submanifold
and $M_{\theta}$  is a totally umbilical submanifold of $\bar{M}$.
Moreover, $M$ is a minimal submanifold of $\bar{M}$. }
\end{enumerate}
\end{thm}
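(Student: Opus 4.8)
The plan is to compute $\|h\|^2$ directly in the adapted orthonormal frame and to bound it below by keeping only the contributions of $h$ along the subbundle $F(D^{\theta})$ of the normal bundle. With $TM^{\perp}=F(D^{\theta})\oplus\nu$, where $\nu$ is the orthogonal complement and $\{e^*_1,\dots,e^*_n\}$, $\{\nu_{\alpha}\}$ are orthonormal frames of $F(D^{\theta})$ and $\nu$, I would start from
$$\|h\|^2=\sum_{k}\sum_{A,B}g(h(E_A,E_B),e^*_k)^2+\sum_{\alpha}\sum_{A,B}g(h(E_A,E_B),\nu_{\alpha})^2,$$
where $\{E_A\}=\{e_1,\dots,e_m,\bar e_1,\dots,\bar e_n\}$ is the full tangent frame. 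Discarding the $\nu$-terms and, inside the $F(D^{\theta})$-terms, splitting the tangent pairs into the types $D^T\times D^T$, $D^T\times D^{\theta}$ and $D^{\theta}\times D^{\theta}$, I would invoke \eqref{eq:5.6} to annihilate the $D^T\times D^T$ contribution, retaining the mixed terms as a lower bound for $\|h\|^2$.

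The core of the argument is the evaluation of the mixed sum $\sum_{i,j,k}g(h(e_i,\bar e_j),e^*_k)^2$. Using $e^*_k=\csc\theta\,F\bar e_k$ together with \eqref{eq:5.7}, each inner product equals $\csc\theta\bigl[-Je_i(\ln f)\,\delta_{jk}-e_i(\ln f)\,g(\bar e_j,T\bar e_k)\bigr]$. Squaring and summing over $j,k$ for fixed $i$, the cross term drops out because $T$ is skew-adjoint on $D^{\theta}$, so $\sum_j g(\bar e_j,T\bar e_j)=0$, while $\sum_{j,k}g(\bar e_j,T\bar e_k)^2=\sum_k\|T\bar e_k\|^2=n\cos^2\theta$ by \eqref{eq:3.10}. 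This leaves $n\csc^2\theta\,(Je_i(\ln f))^2+n\cot^2\theta\,(e_i(\ln f))^2$ for each $i$. Since $\nabla(\ln f)\in\Gamma(D^T)$ and $D^T$ is holomorphic, both $\{e_i\}$ and $\{Je_i\}$ are orthonormal frames of $D^T$, whence $\sum_i(e_i(\ln f))^2=\sum_i(Je_i(\ln f))^2=\|\nabla(\ln f)\|^2$. Summing over $i$ and multiplying by $2$ to account for the ordered mixed pairs $h(e_i,\bar e_j)=h(\bar e_j,e_i)$ yields exactly $2n(\csc^2\theta+\cot^2\theta)\|\nabla(\ln f)\|^2$, which is \eqref{eq:5.8}.

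For part (ii), equality holds precisely when all the discarded terms vanish, that is $g(h(\bar e_i,\bar e_j),e^*_k)=0$ for all $i,j,k$ and $h$ has no $\nu$-component. Together with \eqref{eq:5.6} these force $h(X,Y)=0$ for $X,Y\in\Gamma(D^T)$ and $h(V,W)=0$ for $V,W\in\Gamma(D^{\theta})$. As $M_T$ is totally geodesic in $M$ (being the base of the warped product), the relation $h(D^T,D^T)=0$ promotes this to total geodesy of $M_T$ in $\bar M$; similarly $M_{\theta}$ is totally umbilical in $M$ with mean curvature $-\nabla(\ln f)$, and $h(D^{\theta},D^{\theta})=0$ shows its second fundamental form in $\bar M$ equals $-g(\cdot,\cdot)\,\nabla(\ln f)$, so $M_{\theta}$ is totally umbilical in $\bar M$. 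Finally $\operatorname{tr}h=\sum_i h(e_i,e_i)+\sum_j h(\bar e_j,\bar e_j)=0$, so $M$ is minimal in $\bar M$.

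The step I expect to be most delicate is the bookkeeping in the mixed sum: one must separate the two scalars $e_i(\ln f)$ and $Je_i(\ln f)$, recognize that they produce the $\cot^2\theta$ and $\csc^2\theta$ summands respectively, and confirm that the off-diagonal Gram matrix $g(\bar e_j,T\bar e_k)$ of the adapted frame contributes its full squared norm $n\cos^2\theta$ while leaving the cross term zero. Getting the factor $2$ and the identification of both frame sums with $\|\nabla(\ln f)\|^2$ right is precisely what yields the sharp constant; the remaining manipulations are routine.
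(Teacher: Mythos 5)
Your proposal is correct and follows essentially the same route as the paper: the same decomposition of $\|h\|^2$ in the adapted frame, the same use of \eqref{eq:5.6} to kill the $D^T\times D^T$ part and of \eqref{eq:5.7} to evaluate the mixed part, and the same treatment of the equality case (noting that in the paper's codimension the normal bundle is exactly $F(D^{\theta})$, so your $\nu$-component is trivial). The only cosmetic difference is that you annihilate the cross term via $\operatorname{tr}(T|_{D^{\theta}})=0$, whereas the paper sums over the $D^T$ frame first and uses $g(\nabla\ln f,J\nabla\ln f)=0$.
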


\begin{proof} Since
$$\parallel h \parallel^2= \parallel h(D^T,D^T)
\parallel^2+\parallel h(D^{\theta},D^{\theta}) \parallel^2+2\parallel h(D^T,D^{\theta})
\parallel^2,$$
we have
\begin{eqnarray}
&&\parallel h
\parallel^2=\sum_{k=1}^{m+2p}\sum_{i,j=1}^{m}g(h(e_i,e_j),\tilde{e}_k)^2+\sum_{k=1}^{m+2p}\sum_{r,s=1}^{2p}g(h(\bar{e}_r,\bar{e}_s),\tilde{e}_k)^2\nonumber\\
&&+2\sum_{k=1}^{m+2p}\sum_{r=1}^{2p}\sum_{i=1}^{m}g(h({e}_i,\bar{e}_r),\tilde{e}_k)^2\nonumber
\end{eqnarray}
where $\{ \tilde{e}_k\}$ is an orthonormal basis of $TM^{\perp}$.
Now, considering the adapted frame, we can write the above
equation as
\begin{eqnarray}
\parallel h
\parallel^2&=&\sum_{a=1}^{2p}\sum_{i,j=1}^{m}g(h(e_i,e_j),\csc \theta\,
F\bar{e}_a)^2+
\sum_{a,r,s=1}^{2p}g(h(\bar{e}_r,\bar{e}_s),\csc
\theta\,F\bar{e}_a)^2\nonumber\\
&&+2\sum_{i=1}^{m}\sum_{a,r=1}^{2p}g(h(\bar{e}_r,e_i),\csc
\theta\,F\bar{e}_a)^2.\nonumber
\end{eqnarray}
Then, from (\ref{eq:5.6}) and (\ref{eq:5.7}), we obtain
\begin{eqnarray}
&&\parallel h
\parallel^2=\sum_{a,r,s=1}^{2p}g(h(\bar{e}_r,\bar{e}_s),\csc
\theta\,F\bar{e}_a)^2+2\sum_{i=1}^{m}\sum_{a,r=1}^{2p}(\csc\,\theta)^2[(Je_i(lnf)g(\bar{e}_r,\bar{e}_a))^2\nonumber\\
&&+2Je_i(lnf)g(\bar{e}_r,\bar{e}_a)e_i(lnf)g(\bar{e}_r,T\bar{e}_a)+(e_i(lnf)g(\bar{e}_r,T\bar{e}_a))^2].\nonumber
\end{eqnarray}
Since
\begin{eqnarray}
&&\sum_{i=1}^{m}\sum_{a,r=1}^{2p}Je_i(lnf)g(\bar{e}_r,\bar{e}_a)e_i(lnf)g(\bar{e}_r,T\bar{e}_a)\nonumber\\
&&=\sum_{i=1}^{m}\sum_{a,r=1}^{2p}g(\nabla(lnf), Je_i)g(\nabla (lnf), e_i)g(\bar{e}_r,\bar{e}_a)g(\bar{e}_r,T\bar{e}_a)\nonumber\\
&&=-\sum_{a,r=1}^{2p}[\sum_{i=1}^{m}g(g(\nabla(lnf), e_i)e_i,J\nabla (lnf))]g(\bar{e}_r,\bar{e}_a)g(\bar{e}_r,T\bar{e}_a)=0,\nonumber
\end{eqnarray}
by using (\ref{eq:3.10}) we obtain
\begin{eqnarray}
\parallel h
\parallel^2&=&\sum_{a,r,s=1}^{2p}g(h(\bar{e}_r,\bar{e}_s),\csc
\theta\,F\bar{e}_a)^2+2n\| \nabla lnf\|^2[\csc^2\,\theta+\cot^2\,\theta].\nonumber
\end{eqnarray}
Thus we obtain the inequality
(\ref{eq:5.8}). If the equality sign of (\ref{eq:5.8}) holds, we
have
\begin{equation}
\sum_{a=1}^{2p}\sum_{r,s=1}^{2p}g(h(\bar{e}_r,\bar{e}_s),\csc
\theta\,F\bar{e}_a)^2=0. \label{eq:5.9}
\end{equation}
Since $M_T$ is totally geodesic in $M$, (\ref{eq:5.6}) implies that $M_T$ is
totally geodesic in $\bar{M}$.   On the other hand, (\ref{eq:5.9}) implies that $h$ vanishes on $D^{\theta}$. Since $D^{\theta}$ is a spherical distribution in $M$, it follows that $M_{\theta}$ is a totally umbilical submanifold of $\bar{M}$. Moreover, from (\ref{eq:5.6}) and (\ref{eq:5.9}) it follows that $M$ is minimal in $\bar{M}$. \end{proof}

\begin{rem} It is well known that the
semi-slant submanifolds were introduced as a generalization of
proper slant and proper CR-submanifolds. From Theorem~3.1 and
Theorem 3.2 of \cite{Sahin}, it follows that the semi-slant
submanifolds in the sense of N. Papaghiuc are not useful to
generalize the CR-warped products. But, from  Example~5.1, one can
conclude that non-trivial warped product pointwise semi-slant
submanifolds of the form $M_T\times_f M_{\theta}$ are a
generalization of CR-warped products in Kaehler manifolds.\end{rem}

\noindent Department of Mathematics\\
\noindent Inonu University\\
\noindent 44280, Malatya,Turkey.\\
\noindent E-mail:{\it bayram.sahin@inonu.edu.tr}

\end{document}